\newcommand{\demph}{\textbf}
\theoremstyle{plain}
\newtheorem{theorem}{Theorem}[section]
\newtheorem*{theorem*}{Theorem}
\newtheorem{lemma}[theorem]{Lemma}
\newtheorem{proposition}[theorem]{Proposition}
\newtheorem{corollary}[theorem]{Corollary}
\theoremstyle{definition}
\newtheorem{definition}[theorem]{Definition}
\newtheorem{remark}[theorem]{Remark}
\newtheorem{example}[theorem]{Example}
\tikzset{shorten <>/.style={shorten >=#1,shorten <=#1}}
\tikzset{close/.style={outer sep=-1ex}}
\DeclareFontFamily{U}{mathx}{}
\DeclareFontShape{U}{mathx}{m}{n}{<-> mathx10}{}
\DeclareSymbolFont{mathx}{U}{mathx}{m}{n}
\DeclareMathAccent{\widehat}{0}{mathx}{"70}
\DeclareMathAccent{\widecheck}{0}{mathx}{"71}
\newcommand{\hy}{\mbox{-}}
\newcommand{\coo}{\mathrm{co}}
\renewcommand{\coo}{\mathsf{co}}
\renewcommand{\opp}{\mathsf{op}}
\newcommand{\Prof}{\mathsf{Prof}}
\newcommand{\Mon}{\mathsf{Mon}}
\newcommand{\Grp}{\mathsf{Grp}}
\def\slashedarrowfill@#1#2#3#4#5{%
  $\m@th\thickmuskip0mu\medmuskip\thickmuskip\thinmuskip\thickmuskip
  \relax#5#1\mkern-7mu%
  \cleaders\hbox{$#5\mkern-2mu#2\mkern-2mu$}\hfill
  \mathclap{#3}\mathclap{#2}%
  \cleaders\hbox{$#5\mkern-2mu#2\mkern-2mu$}\hfill
  \mkern-7mu#4$%
}
\def\rightslashedarrowfill@{%
  \slashedarrowfill@\relbar\relbar\mapstochar\rightarrow}
\newcommand\xslashedrightarrow[2][]{%
  \ext@arrow 0055{\rightslashedarrowfill@}{#1}{#2}}
\newcommand{\slashedrightarrow}{\xslashedrightarrow{}}
\title{Cauchy density}
\author{Adrián Doña Mateo\thanks{School of Mathematics, University of Edinburgh, \href{mailto:adrian.dona@ed.ac.uk}{adrian.dona@ed.ac.uk}}}
\date{}
\begin{document}

\maketitle

\begin{abstract}
    In the paper where he defined the Cauchy completion of a $\cat{V}$-category, Lawvere also defined a condition on a $\cat{V}$-functor which made it analogous to a map of metric spaces whose image is topologically dense in its codomain. We call this condition Cauchy density. In this note, we focus on the fully faithful Cauchy dense $\cat{V}$-functors, and show that the Cauchy completion of $\cat{A}$ is the largest $\cat{V}$-category that admits a fully faithful Cauchy dense $\cat{V}$-functor from $\cat{A}$. Moreover, we show that $F \colon \cat{A} \to \cat{B}$ is fully faithful and Cauchy dense iff $[F,\cat{C}] \colon [\cat{B},\cat{C}] \to [\cat{A},\cat{C}]$ is an equivalence for any Cauchy complete $\cat{C}$. Finally, we provide examples and characterisations of Cauchy dense functors in various contexts.
\end{abstract}

\section{Introduction}\label{sec:intro}

In his 1973 paper~\cite{Lawvere1973}, Lawvere explores the view of metric spaces as categories enriched over the quantale of extended nonnegative real numbers $\R^+ = ([0,\infty], \geq, +, 0)$. One of the remarkable achievements of this perspective is that it allows the generalisation of the notion of the completion of a metric space to the setting of $\cat{V}$-categories, a process that has become known as Cauchy completion. Given a $\cat{V}$-category $\cat{A}$, its Cauchy completion $\overline{\cat{A}}$ is the free cocompletion of $\cat{A}$ under the class of absolute weights, i.e.\ those (small) weights $W \colon \cat{K}^\opp \to \cat{V}$ whose colimits are preserved by any $\cat{V}$-functor.

Before introducing the notion of Cauchy completion, Lawvere defines a condition on a $\cat{V}$-functor which in the case $\cat{V} = \R^+$ corresponds to having a dense image in the topological sense. Explicitly, a functor $F \colon \cat{A} \to \cat{B}$ between two small $\cat{V}$-categories is \demph{Cauchy dense} if for all $b, b' \in \cat{B}$ the morphism
\[\int^a \cat{B}(Fa,b') \otimes \cat{B}(b,Fa) \to \cat{B}(b,b')\]
induced by the composition of $\cat{B}$ is an isomorphism.

Cauchy dense functors have been identified by Lucatelli Nunes and Sousa~\cite[Thm~5.6]{LucatelliNunes2022} as the lax epimorphisms in the 2-category $\cat{V}\hy\Cat$, i.e.\ those 1-cells $F \colon \cat{A} \to \cat{B}$ such that for all $\cat{C} \in \cat{V}\hy\Cat$ the ordinary functor $\cat{V}\hy\Cat(F,\cat{C}) \colon \cat{V}\hy\Cat(\cat{B},\cat{C}) \to \cat{V}\hy\Cat(\cat{A},\cat{C})$ is fully faithful.

In this short note, we turn our attention to the fully faithful Cauchy dense functors, which are the analogue in the theory of $\cat{V}$-categories to the inclusions of the dense subsets of a metric space. Section~\ref{sec:CD} gives the relevant definitions and revisits (and slightly generalises) some of the results in~\cite{LucatelliNunes2022}. Section~\ref{sec:ffCD} deals with fully faithful Cauchy dense functors, and shows that the Cauchy completion of a small $\cat{V}$-category $\cat{A}$ is the largest $\cat{V}$-category which admits a fully faithful Cauchy dense functor from $\cat{A}$ (Theorem~\ref{thm:ffCD}). This is analogous to a result proven by Avery and Leinster~\cite{Avery2021} relating adequate functors and the reflexive completion. We also give an alternative characterisation of fully faithful Cauchy dense functors as those $F \colon \cat{A} \to \cat{B}$ such that $[F,\cat{C}] \colon [\cat{B},\cat{C}] \to [\cat{A},\cat{C}]$ is an equivalence for any Cauchy complete $\cat{C}$ (Theorem~\ref{thm:ffCD_iff_pre_equiv}). Finally, Section~\ref{sec:ex} gives examples of Cauchy dense functors, and characterises them explicitly in a number of different contexts.

As is customary when dealing with enriched category theory, every category, functor and natural transformation will be assumed to be $\cat{V}$-enriched unless otherwise specified, where $\cat{V}$ is a complete and cocomplete symmetric monoidal closed category. The corresponding $\Set$-enriched notions will be termed ordinary. Given a $\cat{V}$-category $\cat{A}$, we write $\cat{A}_0$ for the underlying ordinary category of $\cat{A}$, and similarly for $\cat{V}$-functors. Moreover, in the spirit of Kelly, all Kan extensions considered will be assumed to be pointwise.

\vspace{1em}
\noindent\textbf{Acknowledgements.} I would like to thank Tom Leinster for many interesting discussions, and for providing comments on an initial draft of this note. This work has been funded by a PhD Scholarship from The Carnegie Trust for the Universities of Scotland.

\section{Cauchy dense functors}\label{sec:CD}

The definition of Cauchy density is best motivated using profunctors. We give a brief introduction to these, mostly to fix our notation.

Given two small categories $\cat{A}$ and $\cat{B}$, a \demph{profunctor} $\phi \colon \cat{A} \slashedrightarrow \cat{B}$ is a functor $\cat{B}^\opp \otimes \cat{A} \to \cat{V}$. Every functor $F \colon \cat{A} \to \cat{B}$ induces two profunctors in opposite directions
\[\begin{aligned}
    F_* &\colon \cat{A} \slashedrightarrow \cat{B} \qquad  & F_*(b,a) &= \cat{B}(b,Fa), \\
    F^* &\colon \cat{B} \slashedrightarrow \cat{A} & F^*(a,b) &= \cat{B}(Fa,b).
\end{aligned}\]

There is a bicategory $\cat{V}\hy\Prof$ of profunctors introduced by B\'{e}nabou~\cite{Benabou1973} whose 0-cells are small categories, and where $\cat{V}\hy\Prof(\cat{A},\cat{B})$ is the ordinary category $[\cat{B}^\opp \otimes \cat{A},\cat{V}]_0$ of $\cat{V}$-functors and $\cat{V}$-natural transformations. The composite of two profunctors $\phi \colon \cat{A} \slashedrightarrow \cat{B}$ and $\psi \colon \cat{B} \slashedrightarrow \cat{C}$, denoted by $\psi \otimes \phi$, is given by the coend
\[(\psi \otimes \phi)(c,a) = \int^b \phi(b,a) \otimes \psi(c,b).\]
(Note the reversal of the order in which $\psi$ and $\phi$ appear on either side of this definition.) The identity profunctor on $\cat{A}$ is simply $\cat{A}(-,-)$. The fact that this is an identity for composition of profunctors follows from the density formula.

With this definition, the assignments $F \mapsto F_*$ and $F \mapsto F^*$ become bifunctors 
\[(-)_* \colon \cat{V}\hy\Cat \to \cat{V}\hy\Prof \qquad \text{and} \qquad (-)^* \colon \cat{V}\hy\Cat^{\coo\opp} \to \cat{V}\hy\Prof.\]
The main result about $\cat{V}\hy\Prof$ is that $F_* \dashv F^*$ for any functor $F \colon \cat{A} \to \cat{B}$. The unit and counit of this adjunction have components
\begin{align*}
    \eta_{a,a'} &\colon \cat{A}(a,a') \to \int^b \cat{B}(b,Fa') \otimes \cat{B}(Fa,b) = \cat{B}(Fa,Fa') \\
    \eps_{b,b'} &\colon \int^a \cat{B}(Fa,b') \otimes \cat{B}(b,Fa) \to \cat{B}(b,b'),
\end{align*}
where $\eta_{a,a'}$ is the action of $F$ on morphisms, and where $\eps_{b,b'}$ is induced by the composition of $\cat{B}$. With this description, it is clear that $F$ is fully faithful if and only if $\eta$ is an isomorphism. Being Cauchy dense is the dual condition:

\begin{definition}\label{def:Cauchy_dense}
    A functor $F \colon \cat{A} \to \cat{B}$ between small categories is \demph{Cauchy dense} if the counit $\eps$ of the profunctor adjunction $F_* \dashv F^*$ is an isomorphism.
\end{definition}

Lawvere first introduced this condition in~\cite[p.~155]{Lawvere1973}, where he calls it \emph{$\cat{V}$-density}. That term has since become standard for the (as Theorem~\ref{thm:abs_dense} shows) weaker condition that $\cat{B}(F-,-) \colon \cat{B} \to [\cat{A}^\opp,\cat{V}]$ be fully faithful, or equivalently that $\Lan_F F = 1_\cat{B}$. The term `Cauchy dense' used in this note is taken from Day~\cite[p.~428]{Day1977}, and it is motivated by the $\cat{V} = \R^+$ case, as the next proposition shows.

\begin{proposition}\label{prop:top_density1}
    An $\R^+$-functor $f \colon A \to B$ between classical metric spaces is Cauchy dense iff its image is topologically dense in $B$.
\end{proposition}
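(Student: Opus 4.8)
The plan is to unwind the definition of Cauchy density in the quantale $\R^+ = ([0,\infty], \geq, +, 0)$ and compare it with the usual $\eps$–$\delta$ characterisation of topological density. Recall that for $\cat{V} = \R^+$, a $\cat{V}$-category is a (generalised) metric space, with $\cat{A}(a,a')$ the distance from $a$ to $a'$; the monoidal product $\otimes$ is addition, the unit is $0$, coends (colimits) become infima, and isomorphism in $\R^+$ means equality of distances. So for $f \colon A \to B$ the counit component $\eps_{b,b'}$ being an isomorphism says precisely that
\[\inf_{a \in A} \bigl( B(f a, b') + B(b, f a) \bigr) = B(b, b')\]
for all $b, b' \in B$. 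The inequality $\geq$ always holds by the triangle inequality (this is exactly the statement that $\eps$ is a well-defined morphism in $\R^+$-$\Prof$), so Cauchy density reduces to the reverse inequality: for all $b, b'$ and all $\delta > 0$ there exists $a \in A$ with $B(b, f a) + B(f a, b') \leq B(b, b') + \delta$.

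First I would prove the ``topologically dense $\Rightarrow$ Cauchy dense'' direction. Given $b, b'$ and $\delta > 0$, I would use density to pick a point $f a$ in the image within distance $\delta$ of $b$ (i.e.\ $B(b, f a) < \delta$); then the triangle inequality gives $B(b, f a) + B(f a, b') \leq \delta + \bigl(B(b, f a) + B(f a, b')\bigr)$, and more carefully $B(f a, b') \leq B(f a, b) + B(b, b')$, which after adding $B(b, fa)$ yields a bound of the form $B(b, b') + 2\,B(b, fa) < B(b,b') + 2\delta$. Since $\delta$ is arbitrary the infimum equals $B(b,b')$. A small point to watch here is the (possible) asymmetry of $B$: I should make sure I approximate $b$ by image points on the correct side, using $B(b, fa)$ small rather than $B(fa, b)$, so that the triangle inequalities chain in the right direction.

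For the converse, ``Cauchy dense $\Rightarrow$ topologically dense'', I would specialise the defining identity to the case $b' = b$, giving $\inf_{a}\bigl(B(fa,b) + B(b,fa)\bigr) = B(b,b) = 0$. From this I would extract, for each $\delta > 0$, a point $a$ with $B(b, fa) + B(fa, b) < \delta$, hence in particular $B(b, fa) < \delta$, so that $b$ lies in the closure of the image. This shows every point of $B$ is a limit of image points, which is topological density. Again the main subtlety is the direction of approximation induced by the possibly non-symmetric metric, and the fact that ``topologically dense'' should be interpreted using the topology generated by the (forward or symmetrised) metric balls; I would fix the convention at the outset to match the one implicit in the statement.

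The main obstacle, such as it is, lies not in any deep argument but in handling these directional/symmetry conventions cleanly: one must decide whether ``topologically dense'' refers to the topology of the original possibly-asymmetric metric or its symmetrisation, and ensure the triangle inequalities are applied on the correct side in both directions of the proof. Once the convention is fixed, the proof is a direct translation between the coend formula (an infimum) and the $\eps$–$\delta$ definition of closure, so I would present it compactly rather than belabouring the elementary estimates.
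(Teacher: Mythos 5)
Your proposal is correct and follows essentially the same route as the paper: reduce Cauchy density to the infimum identity $\inf_a\bigl(B(b,fa) + B(fa,b')\bigr) = B(b,b')$, take $b'=b$ to extract density, and conversely use the triangle inequality plus symmetry to bound $B(b,fa)+B(fa,b')$ by $B(b,b')+2\eps$. Your worry about asymmetric distances is moot, since the statement restricts to \emph{classical} metric spaces, where symmetry holds by assumption.
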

\begin{proof}
    The Cauchy density of $f$ reduces to the condition
    \[\inf_a (B(b, fa) + B(fa,b')) = B(b,b') \qquad \forall b, b' \in B.\]
    Taking $b' = b$, we see that for any $\eps > 0$ there is some $a \in A$ with $B(b,fa) < \eps$. Hence, $fA$ is dense in $B$.

    Conversely, let $b, b' \in B$ and, for any $\eps > 0$, take $a \in A$ with $B(b,fa) < \eps$. Then
    \[B(b,fa) + B(fa,b') \leq B(b,fa) + B(fa,b) + B(b,b') < 2\eps + B(b,b').\]
    Since $\eps$ can be made arbitrarily small, we conclude that $f$ is Cauchy dense.
\end{proof}

For more general $\cat{V}$, the Cauchy density of $F \colon \cat{A} \to \cat{B}$ does not only depend on its image, although it does if $\cat{V}_0$ is a preorder (see Proposition~\ref{prop:V_preorder}).

Just as being fully faithful is a self-dual condition, so is Cauchy density.

\begin{proposition}\label{prop:self-dual}
    A functor $F \colon \cat{A} \to \cat{B}$ is Cauchy dense iff $F^\opp \colon \cat{A}^\opp \to \cat{B}^\opp$ is.
\end{proposition}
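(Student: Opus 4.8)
The plan is to compute the counit of the profunctor adjunction $(F^\opp)_* \dashv (F^\opp)^*$ explicitly and recognise it, up to the symmetry of $\cat{V}$, as the counit of $F_* \dashv F^*$ with its two object-arguments transposed. Since a symmetry is invertible, the two counits are then isomorphic as morphisms of $\cat{V}$, so one is an isomorphism exactly when the other is.

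First I would unwind $\eps^\opp_{b,b'}$, the counit for $F^\opp \colon \cat{A}^\opp \to \cat{B}^\opp$. Using $F^\opp a = Fa$ on objects and $\cat{B}^\opp(x,y) = \cat{B}(y,x)$ on hom-objects, its source $\int^a \cat{B}^\opp(F^\opp a, b') \otimes \cat{B}^\opp(b, F^\opp a)$ becomes $\int^a \cat{B}(b', Fa) \otimes \cat{B}(Fa, b)$ and its target $\cat{B}^\opp(b,b')$ becomes $\cat{B}(b',b)$. Setting this beside the counit $\eps_{b',b}$ for $F$, whose source is $\int^a \cat{B}(Fa, b) \otimes \cat{B}(b', Fa)$ and whose target is again $\cat{B}(b',b)$, I see that the two coends have the same integrand up to the order of the tensor factors.

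The hard (though routine) part will be to check that the symmetry of $\cat{V}$ intertwines the two maps, i.e.\ that the square
\[\begin{array}{ccc}
\int^a \cat{B}(b',Fa)\otimes\cat{B}(Fa,b) & \xrightarrow{\ \eps^\opp_{b,b'}\ } & \cat{B}(b',b) \\[0.5em]
{\cong}\,\big\downarrow & & \big\| \\[0.4em]
\int^a \cat{B}(Fa,b)\otimes\cat{B}(b',Fa) & \xrightarrow{\ \eps_{b',b}\ } & \cat{B}(b',b)
\end{array}\]
commutes, where the left-hand isomorphism is induced by the componentwise symmetries $\cat{B}(b',Fa)\otimes\cat{B}(Fa,b)\xrightarrow{\sim}\cat{B}(Fa,b)\otimes\cat{B}(b',Fa)$. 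The key observation is that composition in $\cat{B}^\opp$ is, by definition, composition in $\cat{B}$ precomposed with this symmetry; hence on each $a$-component $\eps^\opp_{b,b'}$ equals $\eps_{b',b}$ composed with the symmetry, and the $\cat{V}$-naturality of the latter lets it descend to the displayed isomorphism of coends. This is the only place the symmetric monoidal structure of $\cat{V}$ is genuinely used.

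Granting the square, $\eps^\opp_{b,b'}$ is invertible iff $\eps_{b',b}$ is. Letting $b$ and $b'$ range over all objects of $\cat{B}$, the whole counit $\eps^\opp$ is an isomorphism precisely when $\eps$ is, which is exactly the assertion that $F^\opp$ is Cauchy dense iff $F$ is. Conceptually, this reflects the self-duality of $\cat{V}\hy\Prof$ under the involution $(-)^\opp$, which relates the adjunction $F_* \dashv F^*$ to the adjunction for $F^\opp$.
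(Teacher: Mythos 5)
Your proposal is correct and is essentially the paper's own argument: both identify, component by component, the counit for $F^\opp$ as the counit for $F$ (with its two object-arguments swapped) precomposed with the symmetry of $\cat{V}$, which is exactly how composition in $\cat{B}^\opp$ is defined, and then pass to coends. The only cosmetic difference is that the paper records this as a single commutative square of cowedge components rather than a square of coends, so no further checking is needed on your part.
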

\begin{proof}
    For all $a \in \cat{A}$ and $b,b' \in \cat{B}$, consider the commutative diagram
    \[\begin{tikzcd}
    \cat{B}(Fa,b') \otimes \cat{B}(b,Fa) \ar[r, "M_\cat{B}"] \ar[d, "\sigma"', "\cong"] & \cat{B}(b,b') \ar[d, equals] \\
    \cat{B}^\opp(Fa,b) \otimes \cat{B}^\opp(b',Fa) \ar[r, "M_{\cat{B}^\opp}"'] & \cat{B}^\opp(b',b),
    \end{tikzcd}\]
    where $\sigma$ is the symmetry of $\cat{V}$ and $M_\cat{B}$ and $M_{\cat{B}^\opp}$ are the composition of the respective categories. The counit $\eps^F$ of $F_* \dashv F^*$ is the map induced by the top arrow as $a$ varies, while the counit $\eps^{F^\opp}$ of $(F^\opp)_* \dashv (F^\opp)^*$ is the map induced by the bottom arrow. Clearly, $\eps^F$ is an isomorphism iff $\eps^{F^\opp}$ is.
\end{proof}

Since bifunctors preserve adjunctions, it is easy to say when a left adjoint is Cauchy dense. In fact, thanks to the self-duality of Cauchy density, the same applies to right adjoints:

\begin{proposition}\label{prop:Cauchy_dense_adjoint}
    A functor with an adjoint (either left or right) is Cauchy dense iff said adjoint is fully faithful.
\end{proposition}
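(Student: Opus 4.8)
The plan is to first handle the case in which $F$ is itself a left adjoint, and then obtain the right-adjoint case for free via the self-duality of Cauchy density (Proposition~\ref{prop:self-dual}), exactly as the remark preceding the statement suggests.

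First I would suppose $F \colon \cat{A} \to \cat{B}$ has a right adjoint $H \colon \cat{B} \to \cat{A}$, with $F \dashv H$, and aim to show that $F$ is Cauchy dense iff $H$ is fully faithful. The key move is to apply the bifunctor $(-)_* \colon \cat{V}\hy\Cat \to \cat{V}\hy\Prof$ to the adjunction $F \dashv H$; since bifunctors preserve adjunctions, this produces $F_* \dashv H_*$ in $\cat{V}\hy\Prof$, with unit and counit the images under $(-)_*$ of those of $F \dashv H$. Comparing this with the ever-present adjunction $F_* \dashv F^*$ and invoking uniqueness of adjoints yields a canonical isomorphism $F^* \cong H_*$ intertwining the two counits, so that the counit $\eps$ of $F_* \dashv F^*$ is invertible iff the counit of $F_* \dashv H_*$ is. The latter is $\delta_*$, where $\delta \colon FH \to 1_\cat{B}$ is the counit of $F \dashv H$.

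Next I would reduce invertibility of $\delta_*$ to invertibility of $\delta$. This is where the one non-formal ingredient enters: $(-)_*$ is locally fully faithful, being on hom-categories the assignment $F \mapsto \cat{B}(-,F-)$, which is fully faithful by the Yoneda lemma, and hence reflects invertibility of 2-cells. Granting this, $\eps$ is invertible iff $\delta$ is, and $\delta$ is invertible iff $H$ is fully faithful, the standard criterion for a right adjoint to be fully faithful. Since $F$ is Cauchy dense exactly when $\eps$ is invertible (Definition~\ref{def:Cauchy_dense}), this settles the left-adjoint case.

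Finally, for the right-adjoint case I would take $G \dashv F$, with $G \colon \cat{B} \to \cat{A}$, and pass to opposite categories, turning this into $F^\opp \dashv G^\opp$; thus $F^\opp$ is a left adjoint whose right adjoint is $G^\opp$. Applying the case just proved, $F^\opp$ is Cauchy dense iff $G^\opp$ is fully faithful. By Proposition~\ref{prop:self-dual} the former is equivalent to $F$ being Cauchy dense, and full faithfulness of $G^\opp$ is equivalent to that of $G$, since the two assert invertibility of the same maps $\cat{B}(b,b') \to \cat{A}(Gb,Gb')$. Combining these equivalences gives that $F$ is Cauchy dense iff its left adjoint $G$ is fully faithful. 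The main obstacle I anticipate is the bookkeeping around the isomorphism $F^* \cong H_*$: one must check that it is compatible with the adjunction data, so that it genuinely identifies the profunctor counit with $\delta_*$ rather than merely exhibiting $F^*$ and $H_*$ as abstractly isomorphic profunctors. Once that compatibility is in place, the local full faithfulness of $(-)_*$ does the rest, and the duality argument is purely formal.
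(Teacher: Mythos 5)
Your proposal is correct and follows essentially the same route as the paper: identify $F^* \cong G_*$ via the profunctor adjunction induced by $(-)_*$, recognise the profunctor counit as the representable image $\cat{B}(-,\eps_{=})$ of the adjunction counit, reflect its invertibility back along Yoneda, and handle the left-adjoint case by passing to opposites via Proposition~\ref{prop:self-dual}. The only difference is one of explicitness — you spell out the uniqueness-of-adjoints comparison and the local full faithfulness of $(-)_*$, which the paper leaves implicit — but the argument is the same.
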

\begin{proof}
    If $F \colon \cat{A} \to \cat{B}$ has a right adjoint $G$, then $F^* \cong G_*$ and the counit of the profunctor adjunction $F_* \dashv G_*$ is given by $\cat{B}(-,\eps_{=}) \colon \cat{B}(-,FG{=}) \to \cat{B}(-,{=})$, where $\eps$ is the counit of $F \dashv G$. This is an isomorphism iff $\eps$ is an isomorphism iff $G$ is fully faithful. If $F$ has a left adjoint, we apply this argument to $F^\opp$.
\end{proof}

This proposition gives yet another proof of the folklore fact that, if $H \dashv F \dashv G$ is an adjoint triple of functors, then $H$ is fully faithful iff $G$ is.

\begin{example}\label{ex:CD_adjoint}
    Any reflection and coreflection is Cauchy dense. Some examples are the free functors $\Mon \to \Grp$ and $\Grp \to \Ab$, the forgetful functor $\Top \to \Set$, and the completion functor from $\Met$ to its full subcategory on the complete metric spaces. These categories are not small, but one can make sense of these statements using the contents of Subsection~\ref{ssec:large}.
\end{example}

Cauchy dense functors were identified by Lucatelli Nunes and Sousa~\cite{LucatelliNunes2022} as the \demph{lax epimorphisms} in the 2-category $\cat{V}\hy\Cat$, i.e.\ the $\cat{V}$-functors $F \colon \cat{A} \to \cat{B}$ such that the ordinary functor $[F,\cat{C}]_0 \colon [\cat{B},\cat{C}]_0 \to [\cat{A},\cat{C}]_0$ is fully faithful for all small categories $\cat{C}$. The following theorem follows from their Theorems~5.6 and~5.11.

Recall that a functor $F \colon \cat{A} \to \cat{B}$ is dense if $\Lan_F F = 1_\cat{B}$. It is \demph{absolutely dense} if, moreover, $\Lan_F F$ is preserved by any functor $G \colon \cat{B} \to \cat{C}$. Similarly, $F$ is \demph{absolutely codense} if $\Ran_F F = 1_\cat{B}$ and this Kan extension is preserved by any functor.

\begin{theorem}[Lucatelli Nunes \& Sousa]\label{thm:abs_dense}
    The following are equivalent for a functor $F \colon \cat{A} \to \cat{B}$ between small categories:
    \begin{enumerate}[label={\upshape(\alph*)}]
        \item\label{part:lax_epi} $F$ is a lax epimorphism in $\cat{V}\hy\Cat$;
        \item\label{part:V-lax_epi} $[F,\cat{C}] \colon [\cat{B},\cat{C}] \to [\cat{A},\cat{C}]$ is a fully faithful $\cat{V}$-functor for all small $\cat{C}$;
        \item\label{part:Lan} there is an isomorphism $\Lan_F \cat{B}(b,F-) \cong \cat{B}(b,{=})$ natural in $b \in \cat{B}^\opp$;
        \item\label{part:CD} $F$ is Cauchy dense;
        \item\label{part:abs_dense} $F$ is absolutely dense;
        \item $F$ is absolutely codense.
    \end{enumerate}  
\end{theorem}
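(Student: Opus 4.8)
The plan is to organise the six conditions around the profunctor reformulation of Cauchy density: condition~(d) asserts that the counit $\eps \colon F_* \otimes F^* \to 1_{\cat B}$ (where $1_{\cat B} = \cat B(-,-)$ is the identity profunctor) is invertible in $\cat V\hy\Prof$. Two of the equivalences are then essentially free. First, (c) is merely (d) rewritten: for fixed $b$ the defining coend of $\eps_{b,-}$ is exactly a pointwise left Kan extension, $(\Lan_F \cat B(b,F-))(b') = \int^a \cat B(Fa,b') \otimes \cat B(b,Fa)$, so the isomorphism in (c) and the invertibility of $\eps$ say the same thing, naturally in $b$. Second, (f) will follow from (e) by duality: Cauchy density is self-dual (Proposition~\ref{prop:self-dual}), and $F$ is absolutely codense precisely when $F^\opp$ is absolutely dense, so once (d)$\Leftrightarrow$(e) is established one applies it to $F^\opp$ and uses self-duality of (d). It therefore remains to tie the functor-level conditions (a), (b), (e) to (d).

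For the functor-level conditions I would use that, where the relevant left Kan extensions exist, $\Lan_F$ is left adjoint to precomposition $[F,\cat C]$, and that a right adjoint is fully faithful exactly when its counit is invertible; thus $[F,\cat C]$ is fully faithful iff the counit $\Lan_F(GF) \to G$ is an isomorphism for every $G \colon \cat B \to \cat C$, i.e.\ iff every $G$ is the left Kan extension of its restriction $GF$ along $F$. Read at the level of underlying ordinary categories, (a) says this counit is an ordinary isomorphism for all $\cat C$ and all $G$; since the comparison is $\cat V$-natural, an ordinary isomorphism is automatically a $\cat V$-isomorphism, so (a) already yields the enriched statement. Taking $G = 1_{\cat B}$ gives density $\Lan_F F = 1_{\cat B}$, and the quantification over $G$ and $\cat C$ is precisely the absoluteness clause, giving (a)$\Rightarrow$(e). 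Conversely (e)$\Rightarrow$(b) follows from the universal property of $\Lan_F$, since absolute density turns every $G$ into $\Lan_F(GF)$, whence $[\cat B,\cat C](G,H) \cong [\cat B,\cat C](\Lan_F(GF),H) \cong [\cat A,\cat C](GF,HF)$ in $\cat V$, which is the enriched full faithfulness of $[F,\cat C]$. With (b)$\Rightarrow$(a) trivial, the three are equivalent. Throughout I would handle the existence of the Kan extensions for non-cocomplete small $\cat C$ by first treating cocomplete $\cat C$ and transferring along the Yoneda embedding $\cat C \hookrightarrow [\cat C^\opp,\cat V]$.

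This reduces everything to the equivalence (d)$\Leftrightarrow$(e), which is the crux. The easy direction (e)$\Rightarrow$(d) comes from evaluating preservation at representables: if $\Lan_F F = 1_{\cat B}$ is preserved by every functor, then in particular each $\cat B(b,-) \colon \cat B \to \cat V$ preserves it, giving $\cat B(b,-) \cong \Lan_F(\cat B(b,-)\circ F) = \Lan_F \cat B(b,F-)$, which is (c), hence (d). The hard direction is (d)$\Rightarrow$(e): I must promote the single coend isomorphism of (d), which a priori only records preservation by $\cat V$-valued representables, to preservation of $\Lan_F F$ by every functor out of $\cat B$. Here I would invoke the theory of absolute colimits: the weights $W_b = \cat B(F-,b) \colon \cat A^\opp \to \cat V$ computing $(\Lan_F F)(b) = \int^a \cat B(Fa,b)\cdot Fa$ are, under (d), left-adjoint (Cauchy) weights, because the global adjunction $F_* \dashv F^*$ together with the invertibility of $\eps$ exhibits $1_{\cat B} \cong F_* \otimes F^*$ as an $F_*$-weighted colimit assembled from that adjoint pair; absolute weights are preserved by all functors, yielding (e). Making precise the passage from ``$\eps$ invertible'' to ``$W_b$ is an absolute weight'', and thereby deducing density and its absoluteness simultaneously, is the main obstacle, and is where the profunctorial characterisation of absolute colimits does the real work.
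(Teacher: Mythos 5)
Most of your outline is workable, and is in fact more self-contained than the paper's own proof, which simply cites Lucatelli Nunes and Sousa for the equivalence of (a), (b), (c), (e), (f) and only supplies the two implications (e) $\Rightarrow$ (d) $\Rightarrow$ (c) via the coend formula for $\Lan_F \cat{B}(b,F-)$. Your identification of (c) with (d), your derivation of (f) from (e) by self-duality, and your adjunction argument linking (a), (b), (e) are all fine modulo routine size care, which you address. The problem is the step you yourself flag as the crux, (d) $\Rightarrow$ (e): the plan you give for it is not merely incomplete but rests on a false claim. Cauchy density of $F$ does \emph{not} make the weights $W_b = \cat{B}(F-,b)$ absolute (left-adjoint) weights. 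Concretely, take $\cat{V} = \Set$ and the quotient homomorphism $f \colon \Z \to \Z/2$ of additive groups, viewed as one-object categories: $f$ is Cauchy dense by Corollary~\ref{cor:group_CD}, but the presheaf $W = \Z/2$ with its right $\Z$-action admits no $\Z$-equivariant map into the representable $\Z$ at all (any equivariant $\phi$ would satisfy $\phi(x) = \phi(x+[2]) = \phi(x)+2$), so $W$ is not a retract of a representable, does not lie in $\overline{\Z}$, and by Theorem~\ref{thm:Ccomp} has no right adjoint in $\cat{V}\hy\Prof$. The obstruction is exactly the counit of your hoped-for weight adjunction, a map $\cat{B}(b,Fa') \otimes \cat{B}(Fa,b) \to \cat{A}(a,a')$, which Cauchy density alone cannot produce; this is why the paper's Lemma~\ref{lem:sfCD_Cauchy_completion} must assume $F$ split-full. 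Absolute density asks only that the particular colimits $\cat{B}(F-,b) \star F$ be preserved by every functor, which is strictly weaker than absoluteness of the weights, and your plan conflates the two.

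The step can be repaired without new machinery, by the argument the paper gives for Theorem~\ref{thm:large_iff}: given (c) (equivalently (d)), for any $G \colon \cat{B} \to \cat{C}$ and $c \in \cat{C}$ one has, naturally in $c$,
\[\cat{C}(c,Gb) \cong [\cat{B},\cat{V}](\cat{B}(b,-),\cat{C}(c,G-)) \cong [\cat{B},\cat{V}](\Lan_F\cat{B}(b,F-),\cat{C}(c,G-)) \cong [\cat{A},\cat{V}](\cat{B}(b,F-),\cat{C}(c,GF-)),\]
by Yoneda, (c), and the universal property of $\Lan$; this exhibits $Gb$ as $(\Ran_F GF)(b)$, i.e.\ proves (d) $\Rightarrow$ (f), and then (d) $\Rightarrow$ (e) follows by applying the same computation to $F^\opp$ together with the self-duality of Cauchy density (Proposition~\ref{prop:self-dual}). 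With that substitution, your decomposition of the theorem goes through.
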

\begin{proof}
    The only new condition here is~\ref{part:CD}. It implies~\ref{part:Lan} and is implied by~\ref{part:abs_dense}, since
    \[\Lan_F \cat{B}(b,F-) = (F_* \otimes F^*)(b,{=})\]
    and the canonical map $\Lan_F \cat{B}(b,F-) \to \cat{B}(b,{=})$ is the one induced by the composition of $\cat{B}$.
\end{proof}

Conditions~\ref{part:lax_epi} and~\ref{part:V-lax_epi} are reminiscent of the fact that a short map, i.e.\ an $\R^+$-functor, $g \colon B \to C$ between classical metric spaces is determined by its values on any dense subset of $B$. Hence, if a short map $f \colon A \to B$ has a dense image, $g$ is determined by $gf$. In other words, $\Met(f,C) \colon \Met(B,C) \to \Met(A,C)$ is injective. If, in addition, $f$ is an isometric embedding (i.e.\ fully faithful) and $C$ is complete, then $\Met(f,C)$ is a bijection. The analogue of this will turn out to be true in general for fully faithful Cauchy dense functors -- see Theorem~\ref{thm:ffCD_iff_pre_equiv}.

\begin{corollary}\label{cor:LA_CD_iff}
    Let $F \colon \cat{A} \to \cat{B}$ be left adjoint to $G$. Then the following conditions are equivalent:
    \begin{enumerate}[label={\upshape(\alph*)}]
        \item\label{part:G_ff} $G$ is fully faithful;
        \item\label{part:F_dense} $F$ is dense;
        \item $F$ is absolutely dense;
        \item $F$ is absolutely codense;
        \item $F$ is Cauchy dense.
    \end{enumerate}
\end{corollary}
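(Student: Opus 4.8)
The plan is to note that nearly all of the stated equivalences come for free from the results already established, so that only one implication carries genuine content. By Theorem~\ref{thm:abs_dense} (applied to the arbitrary functor $F$), the conditions ``absolutely dense'', ``absolutely codense'' and ``Cauchy dense'' are equivalent, giving (c) $\Leftrightarrow$ (d) $\Leftrightarrow$ (e). Since $F$ is a left adjoint, Proposition~\ref{prop:Cauchy_dense_adjoint} gives that $F$ is Cauchy dense iff $G$ is fully faithful, i.e.\ (a) $\Leftrightarrow$ (e). Finally (c) $\Rightarrow$ (b) is immediate, as absolute density is by definition density together with a preservation clause. Thus (a), (c), (d) and (e) are mutually equivalent and each implies (b), and it suffices to prove the converse (b) $\Rightarrow$ (a), i.e.\ that for a left adjoint mere density already forces $G$ to be fully faithful.

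For this I would compute $\Lan_F F$ directly from the adjunction. Writing the pointwise left Kan extension as a coend,
\[(\Lan_F F)(b) = \int^a \cat{B}(Fa,b) \otimes Fa,\]
where $\otimes$ denotes the copower of $Fa \in \cat{B}$ by $\cat{B}(Fa,b) \in \cat{V}$, the adjunction isomorphism $\cat{B}(Fa,b) \cong \cat{A}(a,Gb)$ rewrites this weight as a representable, and the density formula collapses the coend:
\[(\Lan_F F)(b) \cong \int^a \cat{A}(a,Gb) \otimes Fa \cong F(Gb),\]
naturally in $b$. Hence $\Lan_F F \cong FG$.

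The one point requiring care, and where I expect the (modest) obstacle to lie, is to check that under this identification the canonical comparison $\Lan_F F \to 1_\cat{B}$ becomes exactly the counit $\eps \colon FG \to 1_\cat{B}$ of $F \dashv G$. By the universal property of the Kan extension, this comparison is the $\cat{B}$-natural transformation corresponding to $\mathrm{id}_F \colon F \to 1_\cat{B} \circ F$, while the isomorphism $\Lan_F F \cong FG$ is exhibited by $F\eta \colon F \to (FG)\circ F$ for $\eta$ the unit of $F \dashv G$; the triangle identity $\eps F \circ F\eta = \mathrm{id}_F$ then shows that $\eps$ is the transformation corresponding to $\mathrm{id}_F$, so the comparison is $\eps$. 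Therefore $F$ is dense iff $\eps$ is invertible iff $G$ is fully faithful, which yields (b) $\Leftrightarrow$ (a) and completes the cycle of equivalences.
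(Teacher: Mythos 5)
Your proof is correct and follows essentially the same route as the paper: both rest on Theorem~\ref{thm:abs_dense}, Proposition~\ref{prop:Cauchy_dense_adjoint}, and the identification $\Lan_F F \cong FG$ with the Kan extension's comparison map being the adjunction counit $\eps$ (so density of $F$ is equivalent to invertibility of $\eps$, i.e.\ full faithfulness of $G$). The only difference is that you spell out the coend computation and the triangle-identity check that the paper states in one line; the logical decomposition of the equivalences is otherwise the same.
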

\begin{proof}
    The counit of the Kan extension $\Lan_F F = FG \to 1_\cat{B}$ is the counit of the adjunction, so~\ref{part:G_ff} and~\ref{part:F_dense} are equivalent. The remaining conditions are equivalent to~\ref{part:G_ff} by Proposition~\ref{prop:Cauchy_dense_adjoint} and Theorem~\ref{thm:abs_dense}.
\end{proof}

Unlike (co)dense functors, Cauchy dense functors are closed under composition. Parts~\ref{part:F_G_cd} and~\ref{part:GF_F_cd} of the next proposition also follow from the characterisation of Cauchy dense functors as lax epimorphisms in $\cat{V}\hy\Cat$ (see~\cite[Rmk.~2.3]{LucatelliNunes2022}).

\begin{proposition}\label{prop:closure}
    Let $F \colon \cat{A} \to \cat{B}$ and $G \colon \cat{B} \to \cat{C}$ be functors between small categories.
    \begin{enumerate}[label={\upshape(\alph*)}]
        \item\label{part:F_G_cd} If $F$ and $G$ are Cauchy dense, then so is $GF$.
        \item\label{part:GF_cd_G_ff} If $GF$ is Cauchy dense and $G$ is fully faithful, then $F$ is Cauchy dense.
        \item\label{part:GF_F_cd} If $GF$ is Cauchy dense and $F$ is Cauchy dense, then $G$ is Cauchy dense.
    \end{enumerate}
\end{proposition}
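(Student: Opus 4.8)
The plan is to recast the whole statement in terms of the counit $\eps$ of the profunctor adjunction, so that Cauchy density of a functor $H$ becomes the single requirement that $\eps_H$ be invertible. Because $(-)_*$ and $(-)^*$ preserve composition, we have $(GF)_* \cong G_* \otimes F_*$ and $(GF)^* \cong F^* \otimes G^*$, and the adjunction $(GF)_* \dashv (GF)^*$ is the composite of $F_* \dashv F^*$ with $G_* \dashv G^*$. Its counit is therefore, up to the coherence isomorphisms, the pasting
\[\eps_{GF} = \eps_G \circ (G_* \otimes \eps_F \otimes G^*),\]
where $G_* \otimes \eps_F \otimes G^*$ is the whiskering of $\eps_F \colon F_* \otimes F^* \to 1_\cat{B}$ by $G_*$ and $G^*$. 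Parts~\ref{part:F_G_cd} and~\ref{part:GF_F_cd} drop straight out of this identity.

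For part~\ref{part:F_G_cd}, if $F$ and $G$ are Cauchy dense then $\eps_F$ and $\eps_G$ are isomorphisms; whiskering preserves invertibility, so $G_* \otimes \eps_F \otimes G^*$ is invertible and hence so is $\eps_{GF}$. For part~\ref{part:GF_F_cd}, if $GF$ and $F$ are Cauchy dense then $\eps_{GF}$ and $G_* \otimes \eps_F \otimes G^*$ are both invertible, so $\eps_G = \eps_{GF} \circ (G_* \otimes \eps_F \otimes G^*)^{-1}$ is invertible and $G$ is Cauchy dense. (Alternatively, one may invoke Theorem~\ref{thm:abs_dense}: from the equality $[GF,\cat{C}]_0 = [F,\cat{C}]_0 \circ [G,\cat{C}]_0$ and the fact that $[F,\cat{C}]_0$ is fully faithful, hence faithful, the usual cancellation for faithful functors forces $[G,\cat{C}]_0$ to be fully faithful for every small $\cat{C}$.)

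Part~\ref{part:GF_cd_G_ff} is the one requiring real work, since it asks us to cancel $G$ on the left; here the lax-epimorphism characterisation is unhelpful, as fully faithfulness of $G$ does not make $[G,\cat{C}]_0$ essentially surjective, so no naive two-out-of-three applies. I would instead argue with the defining coend. Cauchy density of $GF$ supplies, for all objects $c, c'$, a map
\[\int^a \cat{C}(GFa, c') \otimes \cat{C}(c, GFa) \to \cat{C}(c, c')\]
induced by composition in $\cat{C}$, which is an isomorphism. Specialising to $c = Gb$ and $c' = Gb'$ and rewriting each hom-object using the fully faithfulness of $G$ --- which gives natural isomorphisms $\cat{C}(GFa, Gb') \cong \cat{B}(Fa, b')$, $\cat{C}(Gb, GFa) \cong \cat{B}(b, Fa)$ and $\cat{C}(Gb, Gb') \cong \cat{B}(b, b')$ --- turns this into the composition-induced map
\[\int^a \cat{B}(Fa, b') \otimes \cat{B}(b, Fa) \to \cat{B}(b, b'),\]
whose invertibility is exactly the Cauchy density of $F$. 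The main obstacle is checking that this rewriting really carries the composition of $\cat{C}$ to the composition of $\cat{B}$: this is where fully faithfulness of $G$, and not merely faithfulness, is needed, since it is the compatibility of the hom-isomorphisms with composition --- that $G$ is a functor invertible on homs --- that makes the comparison square commute. (One can also phrase this $2$-categorically: whisker $\eps_{GF}$ by $G^*$ and $G_*$ and use the triangle identities, together with $\eta_G$ being invertible, to identify $G^* \otimes \eps_{GF} \otimes G_*$ with $\eps_F$ up to isomorphism.)
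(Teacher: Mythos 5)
Your proof is correct and takes essentially the same route as the paper: the factorisation $\eps^{GF} = \eps^G \circ (G_* \otimes \eps^F \otimes G^*)$ of the composite counit gives~\ref{part:F_G_cd} and~\ref{part:GF_F_cd} by two-out-of-three, and your specialisation of the coend to $c = Gb$, $c' = Gb'$ in~\ref{part:GF_cd_G_ff} is exactly the paper's commutative comparison square whose vertical arrows are the hom-maps of $G$. The only slight imprecision is in your closing remark: that square commutes by mere functoriality of $G$ (any functor preserves composition), while full faithfulness is needed only to make the vertical arrows invertible.
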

\begin{proof}
    The counit $\eps^{GF}$ of the profunctor adjunction $(GF)_* \dashv (GF)^*$ is the composite
    \[\begin{tikzcd}
    G_* \otimes F_* \otimes F^* \otimes G^* \ar[r, "G_* \otimes \eps^F \otimes G^*"] &[3em] G_* \otimes G^* \ar[r, "\eps^G"] & 1_{\cat{C}},
    \end{tikzcd}\]
    where $\eps^F$ and $\eps^G$ are the counits of $F_*\dashv F^*$ and $G_* \dashv G^*$, respectively.

    If $\eps^F$ and $\eps^G$ are isomorphisms, then so is $\eps^{GF}$, giving~\ref{part:F_G_cd}. Similarly, if $\eps^F$ and $\eps^{GF}$ are isomorphisms, then so is $\eps^G$, giving~\ref{part:GF_F_cd}.

    For~\ref{part:GF_cd_G_ff}, note that for all $b,b' \in \cat{B}$ we have a commutative square
    \[\begin{tikzcd}
    \int^a \cat{B}(Fa,b') \otimes \cat{B}(b,Fa) \ar[r, "\eps^F_{b,b'}"] \ar[d, "\int^a G \otimes G"'] &[2em] \cat{B}(b,b') \ar[d, "G"] \\
    \int^a \cat{B}(GFa, Gb') \otimes \cat{B}(Gb, GFa) \ar[r, "\eps^{GF}_{Gb,Gb'}"'] & \cat{C}(Gb,Gb').
    \end{tikzcd}\]
    If $G$ is fully faithful, then the vertical arrows are isomorphisms. Hence, if $\eps^{GF}$ is an isomorphism, so is $\eps^F$.
\end{proof}

\begin{remark}
    If one removes from~\ref{part:GF_cd_G_ff} the assumption that $G$ is fully faithful, then the result is no longer true, even if full faithfulness is replaced by Cauchy density. An example can be easily found among functors between groups, since for such functors Cauchy density is equivalent to surjectivity (i.e.\ fullness) -- see Corollary~\ref{cor:group_CD}.

    If one removes from~\ref{part:GF_F_cd} the assumption that $F$ is Cauchy dense, then the result is no longer true. A counterexample can easily be found among functors between discrete categories, since for such functors Cauchy density is equivalent to bijectivity (on objects) -- see Example~\ref{ex:discrete_CD}.
\end{remark}

The previous proposition shows that fully faithful Cauchy dense functors enjoy strong closure properties. Such functors will be the subject of Section~\ref{sec:ffCD}.

Any lax monoidal functor $S \colon \cat{V}_0 \to \cat{W}_0$, gives a 2-functor $S_* \colon \cat{V}\hy\CAT \to \cat{W}\hy\CAT$, often called \demph{base change}. The next proposition gives some elementary results on how Cauchy density behaves under base change, at least when $S$ is strong monoidal.

\begin{proposition}\label{prop:base_change}
    Let $S \colon \cat{V}_0 \to \cat{W}_0$ be a strong monoidal functor, and $F \colon \cat{A} \to \cat{B}$ be a $\cat{V}$-functor.
    \begin{enumerate}[label={\upshape(\alph*)}]
        \item \label{part:cocts_CD} If $S$ is cocontinuous and $F$ is Cauchy dense, then so is $S_* F$.
        \item \label{part:recolim_CD} If $S$ reflects colimits and $S_*F$ is Cauchy dense, then so is $F$.
    \end{enumerate}
\end{proposition}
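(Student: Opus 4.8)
The plan is to reduce the Cauchy density of both $F$ and $S_*F$ to a statement about a single conical colimit in the underlying ordinary categories, and then to invoke the hypotheses on $S$ as properties of the ordinary functor $S \colon \cat{V}_0 \to \cat{W}_0$. Recall that the base change $S_*\cat{B}$ has the same objects as $\cat{B}$ and hom-objects $(S_*\cat{B})(b,b') = S(\cat{B}(b,b'))$, and that $S_*F$ acts as $F$ on objects. Thus the counit $\eps'$ witnessing the Cauchy density of $S_*F$ has components
\[\eps'_{b,b'} \colon \int^a S(\cat{B}(Fa,b')) \otimes S(\cat{B}(b,Fa)) \to S(\cat{B}(b,b')),\]
the coend now being computed in $\cat{W}_0$, and $S_*F$ is Cauchy dense iff each $\eps'_{b,b'}$ is an isomorphism, just as $F$ is Cauchy dense iff each $\eps_{b,b'}$ is.

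First I would present each coend as a conical colimit. Writing $J = \mathrm{Tw}(\cat{A}_0)^\opp$ for (the opposite of) the twisted arrow category of $\cat{A}_0$, the coend $\int^a T(a,a)$ of any $T \colon \cat{A}_0^\opp \times \cat{A}_0 \to \cat{V}_0$ is the colimit in $\cat{V}_0$ of the diagram obtained by restricting $T$ along the canonical projection $J \to \cat{A}_0^\opp \times \cat{A}_0$. Taking $T(a,a') = \cat{B}(Fa,b') \otimes \cat{B}(b,Fa)$ yields a diagram $\Delta_{b,b'} \colon J \to \cat{V}_0$ whose colimit is the domain of $\eps_{b,b'}$, and the composition maps of $\cat{B}$ assemble into a cocone $M_{b,b'}$ on $\Delta_{b,b'}$ with vertex $\cat{B}(b,b')$. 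The counit $\eps_{b,b'}$ is precisely the comparison map out of the colimit, so $F$ is Cauchy dense iff $M_{b,b'}$ is a colimiting cocone for all $b,b'$. (The colimit exists since $\cat{V}$ is cocomplete.)

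The crux is to identify the diagram and cocone for $S_*F$ with the images under $S$ of those for $F$. Since $S$ is strong monoidal, its structure isomorphisms give $S(\cat{B}(Fa,b')) \otimes S(\cat{B}(b,Fa)) \cong S(\cat{B}(Fa,b') \otimes \cat{B}(b,Fa))$, and I would check that these are natural in the twisted-arrow variable, so that the diagram for $S_*F$ is isomorphic to $S \circ \Delta_{b,b'}$. The same coherence, together with the fact that $S_*$ transports the composition of $\cat{B}$ via the (here strong) structure map, identifies the composition cocone of $S_*\cat{B}$ with the image $S \circ M_{b,b'}$, of vertex $S(\cat{B}(b,b'))$. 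Consequently $S_*F$ is Cauchy dense iff $S \circ M_{b,b'}$ is a colimiting cocone for $S \circ \Delta_{b,b'}$, for every $b,b'$.

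With this in hand both parts are immediate from the relevant property of $S$ as an ordinary functor. For~\ref{part:cocts_CD}: if $S$ is cocontinuous it preserves the colimit, so $M_{b,b'}$ colimiting implies $S \circ M_{b,b'}$ colimiting, whence $F$ Cauchy dense implies $S_*F$ Cauchy dense. For~\ref{part:recolim_CD}: if $S$ reflects colimits, then $S \circ M_{b,b'}$ colimiting implies $M_{b,b'}$ colimiting, whence $S_*F$ Cauchy dense implies $F$ Cauchy dense. I expect the only real work to be the coherence check of the previous paragraph — verifying that the strong monoidal constraints turn the composition wedge of $S_*\cat{B}$ into the $S$-image of that of $\cat{B}$, compatibly with the dinaturality (wedge) conditions indexed by $J$. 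Treating the coend as a single conical colimit, rather than as a coequaliser of coproducts, is what makes this clean: it sidesteps the fact that a colimit-reflecting $S$ need not preserve coproducts, which would otherwise obstruct the argument for~\ref{part:recolim_CD}.
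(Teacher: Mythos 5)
Your overall strategy — present the coend as a single conical colimit, transport the diagram along $S$ via the strong monoidal coherence, then apply preservation for~\ref{part:cocts_CD} and reflection for~\ref{part:recolim_CD} — is the right one, and your closing remark about avoiding the coequaliser-of-coproducts presentation is exactly the relevant subtlety for~\ref{part:recolim_CD}. The gap is in your choice of presentation. The colimit over $\mathrm{Tw}(\cat{A}_0)^\opp$ of the restricted underlying functor computes the \emph{ordinary} ($\Set$-based) coend, whereas Cauchy density concerns the $\cat{V}$-\emph{enriched} coend $(F_* \otimes F^*)(b,b')$, whose universal property quantifies over $\cat{V}$-cowedges: families $\lambda_a \colon T(a,a) \to X$ such that the two composites out of $\cat{A}(a,a') \otimes T(a',a)$ agree, the hom-\emph{object} acting by tensor rather than the hom-\emph{set} $\cat{V}_0(I,\cat{A}(a,a'))$ acting by indexing. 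The two coends agree when $\cat{V} = \Set$ but not in general. Concretely, take $\cat{V}$ to be graded abelian groups, $\cat{A} = \cat{B}$ the one-object $\cat{V}$-category on the graded ring $R = \Z[x]/(x^2)$ with $x$ in degree $1$, and $F$ the identity. Then $F$ is Cauchy dense (the enriched coend is $R \otimes_R R \cong R$), but the endomorphism monoid of $\cat{A}_0$ is only $\cat{V}_0(I,R) = R_0 = \Z$, so your twisted-arrow colimit is just $R \otimes_\Z R$, and the comparison map to $\cat{B}(b,b') = R$ is the multiplication, which kills $x \otimes x$ and is not injective. So the equivalence ``Cauchy dense iff $M_{b,b'}$ is colimiting over $\mathrm{Tw}(\cat{A}_0)^\opp$'' on which both halves of your argument rest is false. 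A second symptom of the same problem: the underlying category of $S_*\cat{A}$ has hom-sets $\cat{W}_0(I,S\cat{A}(a,a'))$, which need not agree with those of $\cat{A}_0$, so your diagrams for $F$ and for $S_*F$ are not even indexed by the same category.

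The repair changes the indexing diagram, not the strategy. Let $J$ have one object for each $a \in \cat{A}$, one object for each ordered pair $(a,a')$, and two arrows from $(a,a')$ to $a$ and to $a'$; send $a \mapsto T(a,a)$ and $(a,a') \mapsto \cat{A}(a,a') \otimes T(a',a)$, with the two arrows going to the two action maps. Cocones under this diagram are precisely $\cat{V}$-cowedges, so its colimit is the enriched coend; and since $(S_*\cat{A})(a,a') = S\cat{A}(a,a')$ and $S$ is strong monoidal, the coherence isomorphisms identify the corresponding $\cat{W}$-diagram for $S_*F$ with $S$ composed with the $\cat{V}$-diagram — this is your coherence check, now also covering the hom-objects. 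With that substitution your two concluding sentences are correct, and part~\ref{part:recolim_CD} still makes no appeal to preservation of coproducts. This repaired argument is, in essence, the paper's own proof, which phrases the same idea in cowedge language: the coherence isomorphism $\mu$ identifies $M_{S_*\cat{B}}$ with $SM_\cat{B}$, and then the property of being a coend cowedge is transported along $S$ by cocontinuity, respectively reflection of colimits.
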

\begin{proof}
    For each $b,b' \in \cat{B}$ and $a \in \cat{A}$, we have a commutative triangle in $\cat{W}_0$
    \[\begin{tikzcd}
    S\cat{B}(Fa,b') \otimes S\cat{B}(b,Fa) \ar[dr, "M_{S_*\cat{B}}"] \ar[d, "\mu"', "\cong"] \\
    S(\cat{B}(Fa,b') \otimes \cat{B}(b,Fa)) \ar[r, "SM_\cat{B}"'] & S\cat{B}(b,b'),
    \end{tikzcd}\]
    where $\mu$ is the coherence isomorphism of $S$. For~\ref{part:cocts_CD}, the cowedge $M_\cat{B}$ is a coend cowedge. Since $S$ is cocontinuous, so is $SM_\cat{B}$ and $M_{S_*\cat{B}}$. For~\ref{part:recolim_CD}, the cowedge $M_{S_*\cat{B}}$ is a coend cowedge, and then so is $SM_\cat{B}$. Since $S$ reflects colimits, $M_\cat{B}$ is also a coend cowedge.
\end{proof}

\begin{example}
    Let $\mathsf{2} = \{\bot < \top\}$ be the walking arrow category. There is a strong monoidal functor $S \colon \mathsf{2} \to \Set$ which sends $\bot \mapsto \varnothing$ and $\top \to 1$. It reflects colimits, being fully faithful, so an order preserving map $f \colon A \to B$ between two $\mathsf{2}$-categories is Cauchy dense if the corresponding functor $S_*f$ between preorders is Cauchy dense. The converse is not true -- see Subsection~\ref{ssec:preorders}.
\end{example}

\subsection{Between large categories}\label{ssec:large}

We conclude this section with some generalisations to large categories. We will say that $F \colon \cat{A} \to \cat{B}$ is \demph{Cauchy dense} if the coend
\[\int^a \cat{B}(Fa,b') \otimes \cat{B}(b,Fa)\]
exists and the map from it to $\cat{B}(b,b')$ induced by the composition of $\cat{B}$ is an isomorphism for all $b,b' \in \cat{B}$.

The proof of Proposition~\ref{prop:self-dual} still applies to this new definition, so that $F$ is Cauchy dense iff $F^\opp$ is. Note also that this coend is the pointwise formula for $\Lan_F \cat{B}(b,F-)$ evaluated at $b'$, so that its existence for all $b'$ gives the pointwise existence of this left Kan extension.

In general, when $\cat{A}$ is large, there is no $\cat{V}$-category $[\cat{A},\cat{B}]$, because defining the $\cat{V}$-object of $\cat{V}$-natural transformations between two functors will involve a large end. However, one can still generalise the parts of Theorem~\ref{thm:abs_dense} that do not involve functor categories.

\begin{theorem}\label{thm:large_iff}
    The following are equivalent for a functor $F \colon \cat{A} \to \cat{B}$ between (not necessarily small) categories:
    \begin{enumerate}[label={\upshape(\alph*)}]
        \item $F$\label{part:large_F_CD} is Cauchy dense;
        \item $F$\label{part:large_F_dense} is absolutely dense;
        \item $F$\label{part:large_F_codense} is absolutely codense.
    \end{enumerate}
\end{theorem}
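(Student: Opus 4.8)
The plan is to reduce the whole statement to the single implication \ref{part:large_F_CD}~$\Rightarrow$~\ref{part:large_F_dense} and then close the cycle using duality. Since Proposition~\ref{prop:self-dual} still holds in this setting, $F$ is Cauchy dense iff $F^\opp$ is; and taking opposites turns the condition $\Lan_F F = 1_\cat{B}$ into $\Ran_{F^\opp} F^\opp = 1_{\cat{B}^\opp}$ and preservation by $G \colon \cat{B} \to \cat{C}$ into preservation by $G^\opp \colon \cat{B}^\opp \to \cat{C}^\opp$. As $G \mapsto G^\opp$ is a bijection on functors out of $\cat{B}$, this shows $F$ is absolutely dense iff $F^\opp$ is absolutely codense. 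The two implications \ref{part:large_F_dense}~$\Rightarrow$~\ref{part:large_F_CD} and \ref{part:large_F_codense}~$\Rightarrow$~\ref{part:large_F_CD} are exactly as in Theorem~\ref{thm:abs_dense}: if $F$ is absolutely dense then $\cat{B}(b,{-})$ preserves $\Lan_F F = 1_\cat{B}$, which unwinds to $\Lan_F \cat{B}(b,F{-}) \cong \cat{B}(b,{=})$, i.e.\ Cauchy density; the codense case follows by applying this to $F^\opp$. Likewise, once \ref{part:large_F_CD}~$\Rightarrow$~\ref{part:large_F_dense} is known, applying it to $F^\opp$ and using self-duality gives \ref{part:large_F_CD}~$\Rightarrow$~\ref{part:large_F_codense}. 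So only the main implication remains.

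To prove it, I would work with the Yoneda embedding $y \colon \cat{B} \to [\cat{B}^\opp,\cat{V}]$, enlarging the universe if $\cat{B}$ is large (this is harmless, as the conclusion only asserts isomorphisms of hom-objects of $\cat{B}$ and $\cat{C}$, which are universe-independent). Fix $b \in \cat{B}$ and consider the colimit of $yF$ weighted by $\cat{B}(F{-},b)$ in the cocomplete category $[\cat{B}^\opp,\cat{V}]$. Being computed pointwise, its value at $x$ is
\[\int^a \cat{B}(Fa,b) \otimes \cat{B}(x,Fa),\]
which by Cauchy density is $\cat{B}(x,b) = (yb)(x)$, with comparison map induced by composition. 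Thus $y$ carries the canonical cocone $\cat{B}(Fa,b) \cdot Fa \to b$ to a colimiting cocone exhibiting $yb$ as this weighted colimit. Since $y$ is fully faithful it reflects colimits, so $b$ is the colimit of $F$ weighted by $\cat{B}(F{-},b)$ via the canonical map; as $b$ varies, this is precisely $\Lan_F F = 1_\cat{B}$, so $F$ is dense.

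It then remains to upgrade density to absolute density, for which I would invoke the general fact that a colimit preserved by the Yoneda embedding is absolute. Given any $G \colon \cat{B} \to \cat{C}$, compose with $y_\cat{C} \colon \cat{C} \to [\cat{C}^\opp,\cat{V}]$; the functor $\Lan_y(y_\cat{C} G) \colon [\cat{B}^\opp,\cat{V}] \to [\cat{C}^\opp,\cat{V}]$ is cocontinuous by the universal property of the presheaf category as a free cocompletion, so $y_\cat{C} G = \Lan_y(y_\cat{C} G) \circ y$ preserves the colimit above; since $y_\cat{C}$ is fully faithful it reflects that colimit, whence $G$ preserves it. This shows $F$ is absolutely dense and completes the main implication, hence the theorem.

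The main obstacle is exactly this passage from the pointwise (presheaf-level) statement of Cauchy density to genuine density of $F$ and onward to absoluteness: Cauchy density only directly controls $\Lan_F$ of the representables $\cat{B}(b,F{-})$, not $\Lan_F F$ itself, and in the large setting the functor-category arguments of Theorem~\ref{thm:abs_dense} are unavailable. The Yoneda embedding is what bridges the gap, since full faithfulness lets it both \emph{reflect} the colimit (yielding density) and \emph{witness} its absoluteness (yielding preservation by every $G$); the only technical wrinkle is the harmless universe enlargement needed to form $[\cat{B}^\opp,\cat{V}]$ when $\cat{B}$ is large.
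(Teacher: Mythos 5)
Your reduction of the theorem to the single implication (a)~$\Rightarrow$~(b) is sound, and it is essentially the mirror image of the paper's own structure: the paper proves (a)~$\Leftrightarrow$~(c) directly and then obtains (b) from the self-duality of Cauchy density (Proposition~\ref{prop:self-dual}), while your two ``easy'' implications are the same preservation-by-representables observation used in Theorem~\ref{thm:abs_dense}. The problem lies in the main implication, and it is precisely the size issue you set aside as harmless.

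The difficulty is not that the conclusion might depend on the universe --- it does not --- but that the \emph{hypothesis} need not survive the enlargement. Cauchy density of $F$ asserts that the coends $\int^a \cat{B}(Fa,b) \otimes \cat{B}(x,Fa)$, indexed by the possibly large category $\cat{A}$, exist \emph{in $\cat{V}$}: certain cowedges are universal when tested against objects of $\cat{V}$. After enlarging to $\cat{V}'$, the pointwise values of your weighted colimit of $yF$ in $[\cat{B}^\opp,\cat{V}']$ are $\cat{V}'$-enriched coends, whose universal property is tested against all of $\cat{V}'$, and a $\cat{V}$-coend of a \emph{large} diagram need not remain one. Even a well-chosen enlargement (say, $\mathbf{SET}'$-valued presheaves on $\cat{V}_0$ sending small colimits to limits, with Day convolution) makes the inclusion $\cat{V} \hookrightarrow \cat{V}'$ preserve all limits and all \emph{small} colimits, but there is no general reason it preserves large colimits that merely happen to exist in $\cat{V}$ --- their existence is an accident of $\cat{V}$, not a property the reflection respects. (For $\cat{V} = \Set$ one can check by hand that a large colimit existing in $\Set$ is still one in $\mathbf{SET}$, but the theorem concerns arbitrary $\cat{V}$; in general one would have to build the enlargement around the specific coends to be preserved, which is genuine work, not a wrinkle.) So the key identification ``its value at $x$ is $\int^a \cat{B}(Fa,b)\otimes\cat{B}(x,Fa)$, which by Cauchy density is $\cat{B}(x,b)$'' is unjustified; the later steps (cocontinuity of the extension along $y$, reflection along $y_\cat{C}$) do go through in the enlarged world, so the gap is concentrated in this one identification. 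The paper's proof is structured exactly to avoid it: it establishes (a)~$\Rightarrow$~(c) through the chain $\cat{C}(c,Hb) \cong [\cat{B},\cat{V}](\cat{B}(b,-),\cat{C}(c,H-)) \cong [\cat{B},\cat{V}](\Lan_F\cat{B}(b,F-),\cat{C}(c,H-)) \cong [\cat{A},\cat{V}](\cat{B}(b,F-),\cat{C}(c,HF-))$, where the strong Yoneda lemma guarantees the first large end exists, Cauchy density is used only through the mapping-out universal property of coends that exist in $\cat{V}$, and existence of the remaining ends is propagated along the isomorphisms --- all inside $\cat{V}$, with no enlargement at all.
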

\begin{proof}
    We show that~\ref{part:large_F_CD} and~\ref{part:large_F_codense} are equivalent. The equivalence of~\ref{part:large_F_CD} and~\ref{part:large_F_dense} will then follow from the fact that Cauchy density is a self-dual condition.

    Let $F$ be Cauchy dense and $H \colon \cat{B} \to \cat{C}$ be an arbitrary functor. We must show that $Hb$ satisfies the same universal property of $\Ran_F HF$ evaluated at $b$ for all $b \in \cat{B}$. Let $c \in \cat{C}$. We have a series of natural isomorphisms
    \begin{align*}
        \cat{C}(c, Hb) &\cong [\cat{B},\cat{V}](\cat{B}(b,-), \cat{C}(c,H-)) && \mbox{by Yoneda,}\\
        &\cong [\cat{B},\cat{V}](\Lan_F \cat{B}(b,F-), \cat{C}(c,H-)) && \mbox{since $F$ is Cauchy dense,} \\
        &\cong [\cat{A},\cat{V}](\cat{B}(b,F-),\cat{C}(c,HF-)) && \mbox{by the universal property of $\Lan$.}
    \end{align*}
    Note that, a priori, only the first object of $\cat{V}$ listed exists, since the rest require some weighted limit in $\cat{V}$ indexed by a large category. However, the series of isomorphisms shows that, in fact, all of them exist. This proves that $Hb$ satisfies the defining universal property of $\{\cat{B}(b,F-), HF\}$, which gives the value at $b$ of $\Ran_F HF$.

    Conversely, let $F$ be absolutely codense. Then $F^\opp$ is absolutely dense, which implies that $\Lan_{F^\opp} \cat{B}(F-,b') = \cat{B}(-,b')$ for all $b' \in \cat{B}$. This is equivalent to saying that $F$ is Cauchy dense.
\end{proof}

With this, Proposition~\ref{prop:Cauchy_dense_adjoint} and Corollary~\ref{cor:LA_CD_iff} remain true for functors between large categories. This justifies the claims made in Example~\ref{ex:CD_adjoint}, since if $F$ has a left or right adjoint the relevant coends always exists.

\begin{remark}\label{rmk:large_closure}
With this definition of Cauchy density of functors between large categories, one can try to remove the hypothesis in Proposition~\ref{prop:closure} that $\cat{A}$, $\cat{B}$ and $\cat{C}$ are small. Part~\ref{part:GF_cd_G_ff} is still true, with the same proof. Parts~\ref{part:F_G_cd} and~\ref{part:GF_F_cd} hold as soon as $\cat{A}$ and $\cat{B}$ are small.
\end{remark}

\section{Fully faithful Cauchy dense functors}\label{sec:ffCD}

Given a metric space $A$, its completion $\overline{A}$ is the largest space into which $A$ embeds as a dense subset. In our language, this means that if $f \colon A \to B$ is a fully faithful Cauchy dense $\R^+$-functor, then $B$ embeds into $\overline{A}$. The main purpose of this section is to generalise this fact to $\cat{V}$-categories.

Given a small category $\cat{A}$ we write $\overline{\cat{A}}$ for its \demph{Cauchy completion}. It is the full subcategory of $[\cat{A}^\opp,\cat{V}]$ on those presheaves that satisfy any of the equivalent conditions in the next theorem, a proof of which can be assembled from the contents of Kelly and Schmitt~\cite[\S6]{Kelly2005}, where they also cite Street~\cite{Street1983}.

\begin{theorem}[Kelly, Schmitt, Street]\label{thm:Ccomp}
Let $\cat{A}$ be a small category. For a functor $W \colon \cat{A}^\opp \to \cat{V}$, the following are equivalent:
\begin{enumerate}[label={\upshape(\alph*)}]
    \item $W$ is small-projective, i.e.\ $[\cat{A}^\opp,\cat{V}](W,-)$ preserves all small colimits;
    \item $W$ is an absolute weight, i.e.\ $F$-weighted colimits are preserved by any functor;
    \item $W$ has a right adjoint in $\cat{V}\hy\Prof$ when seen as profunctor $\mathsf{1} \slashedrightarrow \cat{A}$.
\end{enumerate}
\end{theorem}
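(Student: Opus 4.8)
The plan is to treat condition~\ref{part:large_F_codense}... rather, to treat the profunctor condition~(c) as the hub and connect the other two to it. Viewing $W$ as a profunctor $\mathsf{1} \slashedrightarrow \cat{A}$, a right adjoint amounts to a copresheaf $\psi \colon \cat{A} \to \cat{V}$ equipped with a unit $I \to \int^a W(a) \otimes \psi(a)$ and a counit $\eps_{a,a'} \colon \psi(a) \otimes W(a') \to \cat{A}(a',a)$ obeying the triangle identities. The candidate for $\psi$ is forced: testing the hoped-for hom-isomorphism against representables shows one must take $\psi(a) = [\cat{A}^\opp,\cat{V}](W,\cat{A}(-,a))$, with $\eps$ the evaluation of a $\cat{V}$-natural transformation at $a'$. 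Everything then revolves around the canonical comparison map, for $P \in [\cat{A}^\opp,\cat{V}]$,
\[ \theta_P \colon \int^a \psi(a) \otimes P(a) \longrightarrow [\cat{A}^\opp,\cat{V}](W,P), \]
induced by $\eps$ together with the density presentation $P \cong \int^a P(a) \cdot \cat{A}(-,a)$.

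First I would prove \ref{part:large_F_CD}... the equivalence (a)$\Leftrightarrow$(c), which is the formal part: I claim each is equivalent to $\theta_P$ being invertible for \emph{every} $P$. If $W$ is small-projective, then $[\cat{A}^\opp,\cat{V}](W,-)$ preserves the colimit $P \cong \int^a P(a)\cdot\cat{A}(-,a)$, and the enriched Yoneda lemma identifies the resulting comparison with $\theta_P$; conversely the coend on the left of $\theta_P$ is manifestly cocontinuous in $P$, so $\theta$ invertible makes $[\cat{A}^\opp,\cat{V}](W,-)$ cocontinuous. On the other side, $\theta_P$ invertible for all $P$ is exactly the natural isomorphism $\int^a \psi(a)\otimes P(a) \cong [\cat{A}^\opp,\cat{V}](W,P)$, which packages the unit and counit of an adjunction $W \dashv \psi$ in $\cat{V}\hy\Prof$, the triangle identities falling out of the Yoneda lemma.

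Next, (c)$\Rightarrow$(b) uses only the functoriality of $(-)_*$ and $(-)^*$. If $W \dashv \psi$, then for any $D \colon \cat{A} \to \cat{C}$ the defining isomorphism of the weighted colimit rewrites as $\cat{C}(\mathrm{colim}^W D, c) \cong \int^a \psi(a) \otimes \cat{C}(Da,c)$, that is, $(\mathrm{colim}^W D)^* \cong \psi \otimes D^*$. For any $G \colon \cat{C} \to \cat{D}$ functoriality of $(-)^*$ gives $(G\,\mathrm{colim}^W D)^* \cong (\mathrm{colim}^W D)^* \otimes G^* \cong \psi \otimes (GD)^*$, which is precisely the defining data of $\mathrm{colim}^W(GD)$. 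Hence $G$ preserves the colimit wherever it exists, so $W$ is absolute.

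The main obstacle is the remaining converse (b)$\Rightarrow$(c): manufacturing the adjoint from absoluteness alone. Since $\psi$ and the counit $\eps$ are already in hand, it suffices to build the unit and verify the triangle identities. I would extract the unit from a single application of the hypothesis: the Yoneda embedding $Y \colon \cat{A} \to [\cat{A}^\opp,\cat{V}]$ satisfies $\mathrm{colim}^W Y \cong W$ by density, and applying absoluteness to the functor $[\cat{A}^\opp,\cat{V}](W,-)$ shows this colimit is preserved, yielding an isomorphism $\int^a W(a) \otimes \psi(a) \cong [\cat{A}^\opp,\cat{V}](W,W)$ across which the name of $\mathrm{id}_W$ transports to the desired unit $\eta$. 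The delicate step, which I expect to cost the most, is then checking the two triangle identities for $(\eta,\eps)$: this is a coend bookkeeping argument reconciling the comparison isomorphism coming from absoluteness with the evaluation counit by means of the (co)Yoneda lemma. Once $W \dashv \psi$ is established the theorem closes, and throughout I would take care with the variances of the (co)ends, since that is the only place where errors can hide.
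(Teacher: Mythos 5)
A preliminary point: the paper never proves Theorem~\ref{thm:Ccomp}. It is stated as a quotation from the literature, with the remark that a proof ``can be assembled from the contents of Kelly and Schmitt~\cite[\S6]{Kelly2005}, where they also cite Street~\cite{Street1983}.'' So there is no in-paper argument to compare against; your proposal has to stand on its own, and it does. Moreover, it is essentially the argument of those references: take the adjoint-profunctor condition (c) as the hub, note that any right adjoint is forced to be $\psi = [\cat{A}^\opp,\cat{V}](W,\cat{A}(-,{=}))$ with evaluation as counit, and mediate all three conditions through the comparison map $\theta_P$. Your equivalence of (a) with invertibility of $\theta$ is correct (preservation of the density colimit one way; cocontinuity of $P \mapsto \int^a \psi(a)\otimes P(a)$ the other), and so is (c) $\Rightarrow$ (b) via pseudofunctoriality of $(-)^*$, with one routine caveat: preservation of a colimit requires the isomorphism $(G\,\mathrm{colim}^W D)^* \cong \psi\otimes(GD)^*$ to be induced by $G$ applied to the original colimit cylinder, so you should note that every link in your chain is a canonical map, which makes this check automatic.

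About the step you flag as delicate, the triangle identities: they do go through by exactly the mechanism you describe, but it is worth recording that the two identities have different status. First, your two constructions of the unit --- $\theta_W^{-1}(\mathrm{id}_W)$ in (a) $\Rightarrow$ (c), and transporting $\mathrm{id}_W$ across the isomorphism produced by absoluteness in (b) $\Rightarrow$ (c) --- are the same map $\eta$, because preservation of $\mathrm{colim}^W Y \cong W$ by the functor $[\cat{A}^\opp,\cat{V}](W,-)$ is literally the statement that $\theta_W$ is invertible. The first triangle identity is then the equation $\theta_W(\eta) = \mathrm{id}_W$: unwinding $\theta_W$ through the universal properties of the coend and of the end $[\cat{A}^\opp,\cat{V}](W,W)$ shows that its value on $\eta$ has components exactly the triangle composite, so this identity holds by construction of $\eta$ and costs nothing. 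The second is the one needing the bookkeeping: a map into the end $\psi(a) = \int_{a'}[W(a'),\cat{A}(a',a)]$ is determined by its composites with the evaluations $\eps$, and naturality of $\eps$ lets you rewrite $\eps\circ(t_a\otimes 1)$, where $t_a$ is the second triangle composite, until the first identity appears as an inner factor and collapses, leaving $\eps$. I would therefore suggest factoring the whole verification into a single lemma (``if $\theta_W$ is invertible, then $W\dashv\psi$ with counit the evaluation''); this removes the duplication between your first and last paragraphs, after which (a) $\Rightarrow$ (c) and (b) $\Rightarrow$ (c) are each immediate.
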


The category $\cat{A}$ is \demph{Cauchy complete} if $\overline{\cat{A}}$ coincides with $\cat{A}$, or rather with its image under the Yoneda embedding. Equivalently, $\cat{A}$ is Cauchy complete if it has all absolute colimits. The name `Cauchy completion' was suggested by Lawvere~\cite[\S3]{Lawvere1973}, motivated by the fact that when $A$ is a metric space, its Cauchy completion $\overline{A}$ in the $\R^+$-enriched sense can be identified with the familiar metric completion of $A$ (the space of equivalence classes of Cauchy sequences in $A$).

Our main theorem relates fully faithful Cauchy dense functors and Cauchy completions. The key result that allows this, Lemma~\ref{lem:sfCD_Cauchy_completion} below, actually holds in slightly more generality, for split-full Cauchy dense functors.

\begin{definition}
    A functor $F \colon \cat{A} \to \cat{B}$ is \demph{split-full} if $F_{a,a'} \colon \cat{A}(a,a') \to \cat{B}(Fa,Fa')$ is a split epimorphism for all $a,a' \in \cat{A}$.
\end{definition}

Assuming the axiom of choice, every full functor over $\cat{V} = \Set$ is split-full. For different $\cat{V}$, however, this is stronger than merely requiring $F_{a,a'}$ to be epic. For example, if $\cat{V}_0$ is a preorder, then a $\cat{V}$-functor is split-full iff it is fully faithful.

\begin{lemma}\label{lem:sfCD_Cauchy_completion}
    Let $\cat{A}$ be a small category and $F \colon \cat{A} \to \cat{B}$ be a split-full Cauchy dense functor. Then $\cat{B}(F-,b) \colon \cat{A}^\opp \to \cat{V}$ lies in $\overline{\cat{A}}$ for all $b \in \cat{B}$.
\end{lemma}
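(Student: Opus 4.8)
The plan is to verify the third condition of Theorem~\ref{thm:Ccomp} for the presheaf $W := \cat{B}(F-,b)$: I will exhibit a right adjoint to $W$, regarded as a profunctor $W \colon \mathsf{1} \slashedrightarrow \cat{A}$, in $\cat{V}\hy\Prof$. Writing $b \colon \mathsf{1} \to \cat{B}$ for the functor picking out $b$, so that $b_* = \cat{B}(-,b)$ and $b^* = \cat{B}(b,-)$ with $b_* \dashv b^*$, the density formula identifies $W \cong F^* \otimes b_*$, and the natural candidate for its right adjoint is $W^\dagger := \cat{B}(b,F-) \cong b^* \otimes F_* \colon \cat{A} \slashedrightarrow \mathsf{1}$. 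If $F^*$ had $F_*$ as a right adjoint, then $W \dashv W^\dagger$ would follow formally, since adjoints compose; but $F^* \dashv F_*$ is too strong in general (it amounts to $F$ being fully faithful). The point of the proof is that the representable legs $b_*,b^*$, together with split-fullness, supply exactly what is missing.

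First I would construct the unit $I \to W^\dagger \otimes W$. Here $W^\dagger \otimes W = \int^a \cat{B}(Fa,b) \otimes \cat{B}(b,Fa)$, and since $F$ is Cauchy dense the counit $\eps^F$ (taken at $b' = b$) identifies this coend with $\cat{B}(b,b)$. I would then take the unit to be the image of the identity $1_b \in \cat{B}(b,b)$ under the inverse of this isomorphism. This step uses only Cauchy density, not split-fullness.

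Next I would build the counit $W \otimes W^\dagger \to 1_\cat{A}$. Applying the counit $\eps^b \colon b_* \otimes b^* \to 1_\cat{B}$ of $b_* \dashv b^*$ collapses $W \otimes W^\dagger = F^* \otimes (b_* \otimes b^*) \otimes F_*$ to $F^* \otimes F_* = \cat{B}(F-,F-)$; concretely, pointwise this is just composition $M_\cat{B} \colon \cat{B}(b,Fy) \otimes \cat{B}(Fx,b) \to \cat{B}(Fx,Fy)$. It then remains to produce a map $\cat{B}(F-,F-) \to \cat{A}(-,-)$. In the fully faithful case the unit $\eta^F \colon \cat{A}(-,-) \to \cat{B}(F-,F-)$ (the action of $F$ on homs) is invertible, and one takes $(\eta^F)^{-1}$; for split-full $F$ I would instead use a section of the split epimorphism $\eta^F$.

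The main obstacle is the coherence of this counit. A section of $\eta^F$ is only given objectwise, so it need not assemble into a $\cat{V}$-natural transformation $\cat{B}(F-,F-) \to \cat{A}(-,-)$, and even once naturality is secured one must still verify the two triangle identities for $W \dashv W^\dagger$. This is precisely where split-fullness (strictly stronger than fullness for general $\cat{V}$) has to interlock with Cauchy density: a section $s$ satisfies $\eta^F \circ s = \mathrm{id}$, and I expect that in the triangle identities every occurrence of $s$ is pre- or post-composed with $\eta^F$ — either directly, or through the coend $\int^a$ in $W^\dagger \otimes W$ whose Cauchy-density collapse to $\cat{B}(b,b)$ quotients exactly by the image of $\eta^F$ — so that the non-canonical choices cancel and the idempotent $s \circ \eta^F$ on $\cat{A}(-,-)$ acts as the identity where it matters. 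Pinning down this cancellation, and thereby confirming that $W^\dagger$ is genuinely right adjoint to $W$, is the crux of the argument; once established, Theorem~\ref{thm:Ccomp} yields $\cat{B}(F-,b) \in \overline{\cat{A}}$ for every $b \in \cat{B}$.
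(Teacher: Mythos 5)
Your setup coincides exactly with the paper's own proof: the same candidate adjunction $\cat{B}(F-,b) \dashv \cat{B}(b,F-)$ in $\cat{V}\hy\Prof$, the same unit $\eps_{b,b}^{-1} \circ j_b$ (using only Cauchy density), and the same counit, namely composition $M_\cat{B}$ followed by a chosen section of $F$ on hom-objects. But your proposal stops exactly where the work has to begin: you verify neither that the counit is $\cat{V}$-natural (i.e.\ a genuine 2-cell of $\cat{V}\hy\Prof$) nor the triangle identities, recording only the expectation that ``the non-canonical choices cancel''. That verification is the entire content of the lemma, so as written this is a gap, not a proof. For comparison, the paper settles the triangle-identity part by the density formula: since $\eps_{Fa,b} \circ (\int 1 \otimes F_{a,a'})$ equals the density isomorphism $i$, one gets $\int 1 \otimes F^r_{a,a'} = i^{-1} \circ \eps_{Fa,b}$, so after passing to that coend the choice of section is immaterial.

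However, the naturality worry you raise is not a loose end you merely failed to tie up; it is fatal, and the cancellation you hope for does not happen in general. Take $\cat{V} = \Set$ and let $F \colon \Z/4 \to \Z/2$ be the quotient homomorphism, viewed as a functor between one-object categories. It is split-full (any set-theoretic section works) and Cauchy dense by Corollary~\ref{cor:group_CD}. Here $W = \cat{B}(F-,b)$ is the two-element right $\Z/4$-set on which $\Z/4$ acts through $F$, and $W^\dagger = \cat{B}(b,F-)$. A counit would be a map of profunctors $\phi \colon W \otimes W^\dagger \to \cat{A}(-,-)$; since $F(2) = 0$, the element $2 \in \Z/4$ acts trivially on $W \otimes W^\dagger$ but by translation on $\cat{A}(-,-) = \Z/4$, so naturality forces $\phi(\beta,\alpha) = 2 + \phi(\beta,\alpha)$, which is impossible. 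Hence no counit exists for any choice of section, $W$ has no right adjoint, and indeed $W \notin \overline{\cat{A}}$ (the Cauchy completion of a group over $\Set$ is the group itself, and $W$ is not the regular representation; equivalently, $[\cat{A}^\opp,\Set](W,-)$ is the functor of fixed points of $2$, which does not preserve the coequaliser exhibiting $W$ as a quotient of $\Z/4$). Note that this objection applies equally to the paper's own proof, which defines $\nu_{a,a'} = F^r_{a,a'} \circ M_\cat{B}$ and checks a triangle identity but never checks $\cat{V}$-naturality of $\nu$ --- the example above shows the lemma fails in the stated split-full generality. Both your argument and the paper's do go through when $F$ is fully faithful, since then $F^r_{a,a'} = F_{a,a'}^{-1}$ is automatically natural; that is the only case needed for Theorem~\ref{thm:ffCD}.
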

\begin{proof}
    We will show that $\cat{B}(F-,b) \colon \mathsf{1} \slashedrightarrow \cat{A}$ is left adjoint to $\cat{B}(b,F-) \colon \cat{A} \slashedrightarrow \mathsf{1}$ in $\cat{V}\hy\Prof$. The unit is a morphism $I \to \int^a \cat{B}(Fa,b) \otimes \cat{B}(b,Fa)$ in $\cat{V}$, which we take to be the composite
    \[\delta_* = \begin{tikzcd}
    I \ar[r, "j_b"] & \cat{B}(b,b) \ar[r, "\eps_{b,b}^{-1}"] & \int^a \cat{B}(Fa,b) \otimes \cat{B}(b,Fa).
    \end{tikzcd}\]
    The counit has a component $\nu_{a,a'} \colon \cat{B}(b,Fa') \otimes \cat{B}(Fa,b) \to \cat{A}(a,a')$ for each $a,a' \in \cat{A}$, which we take to be the composite
    \[\nu_{a,a'} = \begin{tikzcd}
    \cat{B}(b,Fa') \otimes \cat{B}(Fa,b) \ar[r, "M_\cat{B}"] & \cat{B}(Fa,Fa') \ar[r, "F_{a,a'}^r"] & \cat{A}(a,a'),
    \end{tikzcd}\]
    where $F^{r}_{a,a'}$ is a right inverse to $F_{a,a'}$.

    We check one of the triangle identities, the other being exactly dual. For each $a \in \cat{A}$, consider the following diagram in $\cat{V}$:
    \[\begin{tikzcd}
    I \otimes \cat{B}(Fa,b) \ar[r, "j_b \otimes 1"] \ar[dr, "l"']
    &[-5pt] \cat{B}(b,b) \otimes \cat{B}(Fa,b) \ar[r, "\eps^{-1}_{b,b} \otimes 1"] \ar[d, "M_\cat{B}"]
    &[-5pt] \int^{a'} \cat{B}(Fa',b) \otimes \cat{B}(b,Fa') \otimes \cat{B}(Fa,b) \ar[d, "\int 1 \otimes M_{\cat{B}}"]\\
    & \cat{B}(Fa,b) \ar[dr, "i^{-1}"']
    & \int^{a'} \cat{B}(Fa',b) \otimes \cat{B}(Fa,Fa') \ar[d, "\int 1 \otimes F^{r}_{a,a'}"] \ar[l, "\eps_{Fa,b}"']\\
    && \int^{a'} \cat{B}(Fa',b) \otimes \cat{A}(a,a')
    \end{tikzcd}\]
    The arrow labelled $l$ is the left unitor in $\cat{V}$, and $i$ is the isomorphism in the density formula.
    
    The triangle on the left commutes by unitality in $\cat{B}$. The square commutes since the same square with the top arrow replaced by its inverse commutes, by associativity in $\cat{B}$.
    
    It remains to check the commutativity of the bottom triangle. The density formula implies that $\eps_{Fa,b} \circ (\int 1 \otimes F_{a,a'}) = i$. It follows that $i^{-1} \circ \eps_{Fa,b} = \int 1 \otimes F^r_{a,a'}$, as needed.
    
    This shows that $(\nu \otimes \cat{B}(F-,b)) \circ (\cat{B}(F-,b) \otimes \delta)$ is the identity on $\cat{B}(F-,b)$, thus establishing one of the triangle identities.
\end{proof}

It is no coincidence that in the previous proof we have only used that $\eps_{b,b}$ is invertible for all $b \in \cat{B}$, rather than the full strength of Cauchy density, i.e.\ that $\eps_{b,b'}$ is invertible for all $b,b'\in \cat{B}$. For split-full functors, Cauchy density is equivalent to this apparently weaker condition -- see Theorem~\ref{thm:split_full_CD}.

\begin{remark}
    If one strengthens the hypotheses of Lemma~\ref{lem:sfCD_Cauchy_completion} so that $F$ is fully faithful and Cauchy dense, then a simpler proof is possible through Theorem~\ref{thm:large_iff}, Avery and Leinster~\cite[Lem.~8.1]{Avery2021} and Kelly and Schmitt~\cite[Prop.~6.14]{Kelly2005}. In fact, from our lemma and the latter result, it follows that if $F$ is split-full and Cauchy dense, then $\cat{B}(F-,b)$ and $\cat{B}(b,F-)$ are a pair of Isbell conjugate presheaves.
\end{remark}

We are now ready to prove the universal property of the Cauchy completion with respect to fully faithful Cauchy dense functors.

\begin{theorem}\label{thm:ffCD}
    Let $\cat{A}$ be a small category. Then $\overline{\cat{A}}$ is the largest category that admits a fully faithful Cauchy dense functor from $\cat{A}$, i.e.\ for every fully faithful Cauchy dense functor $F \colon \cat{A} \to \cat{B}$ there exists an essentially unique functor (necessarily fully faithful) $N_F$ such that the triangle
    \[\begin{tikzcd}[column sep=tiny]
    \cat{A} \ar[dr, "F"', hook] \ar[rr, "z", hook] && \overline{\cat{A}} \\
    & \cat{B} \ar[ur, "N_F"', hook]
    \end{tikzcd}\]
    commutes up to isomorphism, where $z$ is the factorisation of the Yoneda embedding. Moreover, the inclusion of $\cat{A}$ into any full subcategory of $\overline{\cat{A}}$ containing the representables is Cauchy dense.
\end{theorem}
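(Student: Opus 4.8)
The plan is to take $N_F$ to be the restricted nerve $\cat{B}(F-,-) \colon \cat{B} \to [\cat{A}^\opp,\cat{V}]$, sending $b \mapsto \cat{B}(F-,b)$, and to show that it corestricts to a fully faithful functor into $\overline{\cat{A}}$ making the triangle commute. First I would note that a fully faithful functor is in particular split-full, so Lemma~\ref{lem:sfCD_Cauchy_completion} guarantees that every $\cat{B}(F-,b)$ lies in $\overline{\cat{A}}$; since $\overline{\cat{A}}$ is full in $[\cat{A}^\opp,\cat{V}]$, this suffices to corestrict $N_F$ to $\overline{\cat{A}}$. Commutativity of the triangle is then immediate from full faithfulness of $F$, as $N_F(Fa) = \cat{B}(F-,Fa) \cong \cat{A}(-,a) = z(a)$ naturally in $a$. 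For full faithfulness of $N_F$ I would invoke Theorem~\ref{thm:abs_dense}: Cauchy density of $F$ gives (absolute) density, hence $\Lan_F F = 1_\cat{B}$, and this is precisely the condition that the nerve $\cat{B}(F-,-)$ be fully faithful. As corestriction to a full subcategory preserves full faithfulness, $N_F \colon \cat{B} \to \overline{\cat{A}}$ is fully faithful.

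For essential uniqueness the key point is that $F$ is \emph{absolutely} dense. Given any $G \colon \cat{B} \to \overline{\cat{A}}$ with $G F \cong z$, the functor $G$ preserves the Kan extension $\Lan_F F = 1_\cat{B}$, so $G \cong G \circ \Lan_F F \cong \Lan_F(G F) \cong \Lan_F z$. The same reasoning applied to $N_F$ gives $N_F \cong \Lan_F(N_F F) \cong \Lan_F z$, whence $G \cong N_F$. In particular every such $G$ is fully faithful, which justifies the parenthetical remark that $N_F$ is necessarily fully faithful.

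For the final claim, let $\cat{C}$ be any full subcategory of $\overline{\cat{A}}$ containing the representables, and let $z' \colon \cat{A} \to \cat{C}$ be the corestriction of $z$. Fix $W, W' \in \cat{C}$. By the density formula, $W' \cong \int^a W'(a) \otimes z'a$ as a weighted colimit of representables in $[\cat{A}^\opp,\cat{V}]$. Since $W$ lies in $\overline{\cat{A}}$ it is small-projective (Theorem~\ref{thm:Ccomp}), so $[\cat{A}^\opp,\cat{V}](W,-)$ preserves this colimit; combined with the Yoneda isomorphisms $W'(a) \cong \cat{C}(z'a,W')$ and $[\cat{A}^\opp,\cat{V}](W,z'a) = \cat{C}(W,z'a)$ this yields
\[\cat{C}(W,W') \cong \int^a \cat{C}(z'a,W') \otimes \cat{C}(W,z'a),\]
which is Cauchy density of $z'$. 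Taking $\cat{C} = \overline{\cat{A}}$ shows that $z$ itself is fully faithful and Cauchy dense, so $\overline{\cat{A}}$ is an instance of the situation above, and the preceding paragraphs identify it as the largest one.

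The main obstacle I anticipate is bookkeeping rather than conceptual: one must verify that the isomorphism produced in the last display is precisely the map induced by composition in $\cat{C}$, so that it witnesses Cauchy density in the sense of Definition~\ref{def:Cauchy_dense}. This amounts to tracing the cowedge of the colimit $W' \cong \int^a W'(a) \otimes z'a$ through the preservation isomorphism and identifying it with the composition cowedge. The remaining verifications -- naturality of the triangle isomorphism and the compatibility of the various canonical maps -- are routine.
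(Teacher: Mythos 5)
Your proposal is correct, and its core is the same as the paper's: you take $N_F = \cat{B}(F-,-)$, use Lemma~\ref{lem:sfCD_Cauchy_completion} (via fully faithful $\Rightarrow$ split-full) to land in $\overline{\cat{A}}$, get commutativity of the triangle from full faithfulness of $F$, and get full faithfulness of $N_F$ from density of $F$ via Theorem~\ref{thm:abs_dense} -- exactly the paper's argument. You diverge in the two remaining parts. For essential uniqueness, the paper notes that $N_F$, being fully faithful, reflects colimits, so each $b \in \cat{B}$ is an absolute colimit of objects $Fa$, which any competing $N'_F$ must preserve; you instead observe that absolute density exhibits any $G$ with $GF \cong z$ as a left Kan extension $\Lan_F z$. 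These are two phrasings of the same absoluteness idea, and yours is arguably tidier; the only caution is that $\overline{\cat{A}}$ need not be small, so for preservation by functors into it you are safer citing Theorem~\ref{thm:large_iff} than Theorem~\ref{thm:abs_dense}. For the final claim, the paper shows only that $z$ is Cauchy dense (it is absolutely dense by the definition of $\overline{\cat{A}}$, then apply Theorem~\ref{thm:large_iff}) and deduces the statement for an arbitrary full subcategory $\cat{C}$ containing the representables from Proposition~\ref{prop:closure}\ref{part:GF_cd_G_ff} and Remark~\ref{rmk:large_closure}; you instead verify Cauchy density of $z'$ directly from small-projectivity of $W$ and the density formula. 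Your computation is self-contained and handles all $\cat{C}$ uniformly, but it carries precisely the bookkeeping burden you flag, and that burden is genuine: Cauchy density requires the specific composition-induced map $\eps_{W,W'}$ to be invertible, not merely that some isomorphism exist. The check is routine, though -- the density cowedge $W'(a) \otimes \cat{A}(-,a) \to W'$ corresponds under Yoneda and the tensor comparison exactly to the composition cowedge $\cat{C}(z'a,W') \otimes \cat{C}(W,z'a) \to \cat{C}(W,W')$ -- and the paper's route through its closure results is designed to avoid doing it by hand.
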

\begin{proof}
    We take $N_F(b) = \cat{B}(F-,b)$, which by Lemma~\ref{lem:sfCD_Cauchy_completion} lies in $\overline{\cat{A}}$. Then $N_F(Fa) = \cat{B}(F-,Fa) \cong \cat{A}(-,a) = za$ naturally in $a \in \cat{A}$ because $F$ is fully faithful, so the triangle in the statement commutes up to isomorphism. By Theorem~\ref{thm:abs_dense}, $F$ is dense, which is to say that $N_F$ is fully faithful. 
    
    Since $N_F$ reflects colimits, each object of $\cat{B}$ is an absolute colimit of objects in the image of $F$. Any other functor $N'_F \colon \cat{B} \to \overline{\cat{A}}$ completing the triangle in the statement preserves these colimits, and hence must be isomorphic to $N_F$.

    For the last statement, it suffices by Proposition~\ref{prop:closure}\ref{part:GF_cd_G_ff} and Remark~\ref{rmk:large_closure} to show that $z$ is Cauchy dense. It is so, by Theorem~\ref{thm:large_iff}, because it is absolutely dense by the definition of $\overline{\cat{A}}$.
\end{proof}

Hence, a fully faithful Cauchy dense functor amounts to the inclusion of a category $\cat{A}$ into a copy of itself with freely adjoined colimits for some specified set of diagrams with absolute weights.

Avery and Leinster~\cite[Cor.~8.3 and Thm.~8.4]{Avery2021} showed that the reflexive completion $\cat{R}(\cat{A})$ of a small category $\cat{A}$ is the largest category into which $\cat{A}$ admits an adequate functor, i.e.\ a functor $F \colon \cat{A} \to \cat{B}$ which is fully faithful, dense and codense. Theorem~\ref{thm:ffCD} shows that the Cauchy completion admits a similar description, where adequate functors are replaced by fully faithful Cauchy dense functors. Since every Cauchy dense functor is dense and codense, it follows that $\overline{\cat{A}} \subseteq \cat{R}(\cat{A})$, as Avery and Leinster have already shown~\cite[Prop.~10.1]{Avery2021}.

\begin{corollary}\label{cor:eq_CC}
    Two small categories $\cat{A}$ and $\cat{B}$ are Morita equivalent, i.e.\ $\overline{\cat{A}} \simeq \overline{\cat{B}}$, iff there is a zigzag of fully faithful Cauchy dense functors connecting them.
\end{corollary}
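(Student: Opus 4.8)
The plan is to reduce the whole statement to one lemma: \emph{a fully faithful Cauchy dense functor $F \colon \cat{A} \to \cat{B}$ induces an equivalence $\overline{\cat{A}} \simeq \overline{\cat{B}}$}. Granting this, both directions are short. For $(\Rightarrow)$, suppose $\Phi \colon \overline{\cat{A}} \to \overline{\cat{B}}$ is an equivalence. The Yoneda factorisations $z_\cat{A}$ and $z_\cat{B}$ are fully faithful and, by the last part of Theorem~\ref{thm:ffCD}, Cauchy dense, while $\Phi$ is fully faithful and Cauchy dense (being a left adjoint whose right adjoint, a quasi-inverse, is fully faithful, by Proposition~\ref{prop:Cauchy_dense_adjoint}). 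Hence
\[\begin{tikzcd}
\cat{A} \ar[r, "z_\cat{A}"] & \overline{\cat{A}} \ar[r, "\Phi"] & \overline{\cat{B}} & \cat{B} \ar[l, "z_\cat{B}"']
\end{tikzcd}\]
is a zigzag of fully faithful Cauchy dense functors. For $(\Leftarrow)$, the lemma converts each functor of a given zigzag into an equivalence of Cauchy completions, and composing these equivalences yields $\overline{\cat{A}} \simeq \overline{\cat{B}}$.

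To prove the lemma I would isolate the following claim: if $H \colon \cat{X} \to \cat{Y}$ is fully faithful and Cauchy dense with $\cat{X}$ small and $\cat{Y}$ Cauchy complete, then $N_H \colon \cat{Y} \to \overline{\cat{X}}$, $y \mapsto \cat{Y}(H-,y)$, is an equivalence. The lemma follows at once: the composite $z_\cat{B} \circ F \colon \cat{A} \to \overline{\cat{B}}$ is fully faithful and Cauchy dense by Proposition~\ref{prop:closure}\ref{part:F_G_cd}, its codomain is Cauchy complete, and so the claim yields $\overline{\cat{B}} \simeq \overline{\cat{A}}$.

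For the claim itself, $N_H$ is fully faithful by Theorem~\ref{thm:ffCD} (its source $\cat{X}$ is small), so only essential surjectivity is at issue. Given an absolute weight $W \in \overline{\cat{X}}$, the colimit $W \star H$ exists in $\cat{Y}$ because $\cat{Y}$ is Cauchy complete and $W$ is absolute. Since absolute colimits are preserved by every functor, each representable $\cat{Y}(Hx,-)$ preserves $W \star H$, whence
\[N_H(W \star H)(x) = \cat{Y}(Hx, W \star H) \cong W \star \cat{Y}(Hx, H-) \cong W \star \cat{X}(x,-) \cong W(x),\]
naturally in $x$, using full faithfulness of $H$ for the middle isomorphism and the density formula for the last. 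Thus $W$ lies in the essential image of $N_H$, and as $\overline{\cat{X}}$ consists precisely of the absolute weights, $N_H$ is essentially surjective.

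The step I expect to be the main obstacle is the essential surjectivity in the claim: confirming that the comparison $N_H(W \star H) \cong W$ is genuinely natural in $x$, that every object of $\overline{\cat{X}}$ is realised in this way, and that ``Cauchy complete'' supplies exactly the weighted colimits $W \star H$ that the argument needs. The remaining bookkeeping—full faithfulness of $N_H$, closure of Cauchy density under composition, and the Cauchy density of equivalences and of $z_\cat{A}, z_\cat{B}$—is routine given the results already established.
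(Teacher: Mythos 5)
Your proposal is correct, and its outer structure (the zigzag through the completions for the forward direction, and the reduction of the backward direction to the single lemma that a fully faithful Cauchy dense $F \colon \cat{A} \to \cat{B}$ gives $\overline{\cat{A}} \simeq \overline{\cat{B}}$) coincides with the paper's. Where you genuinely diverge is in the proof of that lemma. The paper applies Theorem~\ref{thm:ffCD} \emph{twice more}, to $N_F \colon \cat{B} \to \overline{\cat{A}}$ and to $z_\cat{B}F \colon \cat{A} \to \overline{\cat{B}}$, producing a pair of functors $N_{N_F}$ and $N_{z_\cat{B}F}$ in opposite directions, and then shows the two composites are isomorphic to identities because they are so on representables and $\overline{(-)}$ is the free cocompletion under absolute weights; the whole argument stays at the level of universal properties and essential uniqueness. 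You instead build only one functor, $N_{z_\cat{B}F} \colon \overline{\cat{B}} \to \overline{\cat{A}}$, get full faithfulness from Theorem~\ref{thm:ffCD}, and prove essential surjectivity by hand: every $W \in \overline{\cat{A}}$ is an absolute weight (Theorem~\ref{thm:Ccomp}), so $W \star (z_\cat{B}F)$ exists in the Cauchy complete $\overline{\cat{B}}$ and is sent back to $W$ via preservation of absolute colimits, full faithfulness, and co-Yoneda. This computation is sound (a slightly slicker packaging that makes naturality in $x$ automatic: the single functor $\overline{\cat{B}}(z_\cat{B}F-,{=}) \colon \overline{\cat{B}} \to [\cat{A}^\opp,\cat{V}]$ preserves the absolute colimit $W \star z_\cat{B}F$, so the isomorphism $N_{z_\cat{B}F}(W \star z_\cat{B}F) \cong W$ lives directly in the presheaf category). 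What your route buys is a reusable intermediate statement of independent interest -- a Cauchy complete category receiving a fully faithful Cauchy dense functor from a small $\cat{X}$ \emph{is} the Cauchy completion of $\cat{X}$ -- and it avoids invoking the free-cocompletion universal property; what the paper's route buys is brevity and freedom from any weighted-colimit calculus once Theorem~\ref{thm:ffCD} is in place. Two cosmetic points: when you cite Proposition~\ref{prop:closure}\ref{part:F_G_cd} for the composite $z_\cat{B}F$, you should also cite Remark~\ref{rmk:large_closure}, since $\overline{\cat{B}}$ is not small (the paper is careful to do this), and similarly the use of Proposition~\ref{prop:Cauchy_dense_adjoint} for the equivalence $\Phi$ between the (possibly large) completions rests on the extension of that proposition to large categories noted after Theorem~\ref{thm:large_iff}.
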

\begin{proof}
    If $\cat{A}$ and $\cat{B}$ are Morita equivalent, then $\begin{tikzcd}[cramped, column sep=scriptsize] \cat{A} \ar[r, "z_\cat{A}"] & \overline{\cat{A}} \simeq \overline{\cat{B}} & \cat{B} \ar[l, "z_\cat{B}"']\end{tikzcd}$ gives the desired zigzag.

    Conversely, it suffices to show that if $F \colon \cat{A} \to \cat{B}$ is fully faithful and Cauchy dense, then $\overline{\cat{A}} \simeq \overline{\cat{B}}$. Consider the following diagram of fully faithful functors:
    \[\begin{tikzcd}
    \cat{A} \ar[d, "F"', hook] \ar[r, "z_\cat{A}", hook] & \overline{\cat{A}} \ar[d, "N_{N_F}", hook] \\
    \cat{B} \ar[ur, "N_F", hook] \ar[r, "z_\cat{B}", hook] & \overline{\cat{B}}.
    \ar[u, bend right=75, looseness=1.5, "N_{z_\cat{B} F}"', hook]
    \end{tikzcd}\]
    Note that $N_F$ and $z_\cat{B} F$ are fully faithful and Cauchy dense by Proposition~\ref{prop:closure}\ref{part:GF_cd_G_ff} and Remark~\ref{rmk:large_closure}, so $N_{N_F}$ and $N_{z_\cat{B}F}$ exist by Theorem~\ref{thm:ffCD}. Then $N_{z_\cat{B}F} N_{N_F}$ and $N_{N_F} N_{z_\cat{B}F}$ are isomorphic to the respective identities, because they are so on representables and Cauchy completion is the free cocompletion with respect to absolute colimits.
\end{proof}

The next theorem gives an alternative way of identifying the fully faithful Cauchy dense functors among the Cauchy dense ones, in view of Theorem~\ref{thm:abs_dense}. 

\begin{theorem}\label{thm:ffCD_iff_pre_equiv}
    Let $F \colon \cat{A} \to \cat{B}$ be a functor between small categories. Then $F$ is fully faithful and Cauchy dense iff the functor $[F,\cat{C}] \colon [\cat{B},\cat{C}] \to [\cat{A},\cat{C}]$ is an equivalence for every Cauchy complete category $\cat{C}$.
\end{theorem}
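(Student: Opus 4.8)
The plan is to prove each implication by separating the Cauchy density content (handled by Theorem~\ref{thm:abs_dense}) from the essential-surjectivity/full-faithfulness content (handled by Theorem~\ref{thm:ffCD} and the universal property of $\overline{\cat{A}}$ as the free cocompletion under absolute weights). For the forward implication, suppose $F$ is fully faithful and Cauchy dense and let $\cat{C}$ be Cauchy complete. Cauchy density already gives via Theorem~\ref{thm:abs_dense}\ref{part:V-lax_epi} that $[F,\cat{C}]$ is fully faithful, so only essential surjectivity remains. Here I would invoke Theorem~\ref{thm:ffCD} to obtain the fully faithful $N_F \colon \cat{B} \to \overline{\cat{A}}$ with $N_F F \cong z$. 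Given any $G \colon \cat{A} \to \cat{C}$, the universal property of $\overline{\cat{A}}$ says precomposition with $z$ is an equivalence $[\overline{\cat{A}},\cat{C}] \to [\cat{A},\cat{C}]$ whenever $\cat{C}$ is Cauchy complete (every functor preserves absolute colimits), so $G$ extends to $\bar{G} \colon \overline{\cat{A}} \to \cat{C}$ with $\bar{G} z \cong G$; then $\bar{G} N_F \colon \cat{B} \to \cat{C}$ satisfies $(\bar{G} N_F)F \cong \bar{G} z \cong G$. Hence $[F,\cat{C}]$ is essentially surjective, and therefore an equivalence.

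For the converse, assume $[F,\cat{C}]$ is an equivalence for every Cauchy complete $\cat{C}$, and first recover Cauchy density. For an arbitrary small $\cat{C}$, postcomposition with the fully faithful $z_\cat{C} \colon \cat{C} \hookrightarrow \overline{\cat{C}}$ gives a commutative square relating $[F,\cat{C}]$ (top) and $[F,\overline{\cat{C}}]$ (bottom), whose vertical maps $[\cat{B},z_\cat{C}]$ and $[\cat{A},z_\cat{C}]$ are fully faithful. Since $[F,\overline{\cat{C}}]$ is an equivalence by hypothesis, the composite $[\cat{A},z_\cat{C}] \circ [F,\cat{C}]$ is fully faithful, and cancelling the fully faithful $[\cat{A},z_\cat{C}]$ shows $[F,\cat{C}]$ is fully faithful. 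By Theorem~\ref{thm:abs_dense} this means $F$ is Cauchy dense.

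It remains to prove $F$ is fully faithful, which is the crux. Being Cauchy dense, $F$ is absolutely dense, so every $b \in \cat{B}$ is presented as an absolute colimit $b \cong \int^a \cat{B}(Fa,b) \otimes Fa$, and moreover the nerve $N_F = \cat{B}(F-,-) \colon \cat{B} \to [\cat{A}^\opp,\cat{V}]$ is fully faithful. Applying the hypothesis with $\cat{C} = \overline{\cat{A}}$, essential surjectivity of $[F,\overline{\cat{A}}]$ produces $H \colon \cat{B} \to \overline{\cat{A}}$ with $HF \cong z$. The key observation is that $H$ must coincide with $N_F$: since $H$ preserves the absolute colimit above and $HF \cong z$ is the Yoneda embedding, the co-Yoneda (density) formula gives $Hb \cong \int^a \cat{B}(Fa,b) \otimes \cat{A}(-,a) \cong \cat{B}(F-,b) = N_F b$, naturally in $b$. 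Thus $N_F$ factors through the fully faithful inclusion $\overline{\cat{A}} \hookrightarrow [\cat{A}^\opp,\cat{V}]$ as an isomorph of $H$; as both $N_F$ and this inclusion are fully faithful, $H$ is fully faithful. Finally, from $HF \cong z$ with $H$ and $z$ both fully faithful one reads off that $F_{a,a'}$ is invertible, so $F$ is fully faithful, completing the argument.

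I expect the main obstacle to be precisely the identification $H \cong N_F$ in the converse: essential surjectivity only yields $H$ abstractly, and turning this existential statement into full faithfulness of $F$ depends on recognising that $H$ preserves the canonical absolute-colimit presentation of each object of $\cat{B}$ and then applying co-Yoneda. The two directions also lean on the universal property of $\overline{\cat{A}}$ in opposite ways—to extend functors out of $\overline{\cat{A}}$ in the forward direction, and to pin down $H$ into $\overline{\cat{A}}$ in the converse—so I would take care that this universal property is invoked consistently.
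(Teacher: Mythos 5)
Your proof is correct, but it takes a genuinely different route from the paper's, most visibly in the converse direction. The paper proves the converse with a single test category: it forms the cocontinuous extension $\widehat{F} = \Lan_{y_\cat{A}} y_\cat{B}F$ between presheaf categories, identifies its right adjoint as $[F^\opp,\cat{V}] = [F,\cat{V}^\opp]^\opp$, applies the hypothesis to the \emph{large} Cauchy complete category $\cat{V}^\opp$ to conclude that $\widehat{F}$ is an equivalence (whence $F$ is fully faithful by Yoneda), and only then deduces Cauchy density by restricting $\widehat{F}$ to small-projectives and invoking Proposition~\ref{prop:closure}\ref{part:GF_cd_G_ff}. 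You work in the opposite order and test only against Cauchy completions: first $\cat{C} = \overline{\cat{C}'}$ for arbitrary small $\cat{C}'$, together with cancellation of fully faithful functors and Theorem~\ref{thm:abs_dense}, to get Cauchy density; then $\cat{C} = \overline{\cat{A}}$ to produce $H$ with $HF \cong z$ and to identify $H$ with the nerve $\cat{B}(F-,-)$, giving full faithfulness. One payoff of your route is that it never applies the hypothesis to a category like $\cat{V}^\opp$: it only needs $[F,\cat{C}]$ to be an equivalence when $\cat{C}$ is a Cauchy completion of a small category, so it establishes a formally stronger ``if'' direction. Your forward direction is also assembled differently (full faithfulness of $[F,\cat{C}]$ from Theorem~\ref{thm:abs_dense}, essential surjectivity by extending along $z$ and precomposing with $N_F$), though it uses the same two ingredients as the paper, namely Theorem~\ref{thm:ffCD} and the equivalence $[z,\cat{C}] \colon [\overline{\cat{A}},\cat{C}] \to [\cat{A},\cat{C}]$ of Kelly and Schmitt~\cite[7.1]{Kelly2005}; the paper instead gets the equivalence all at once from a commutative square using Corollary~\ref{cor:eq_CC}.

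One step in your crux argument should be tightened. After applying $H$ to the absolute colimit $b \cong \int^a \cat{B}(Fa,b) \otimes Fa$, the resulting coend $\int^a \cat{B}(Fa,b) \otimes HFa$ is a colimit computed in $\overline{\cat{A}}$, whereas the co-Yoneda isomorphism $\int^a \cat{B}(Fa,b) \otimes \cat{A}(-,a) \cong \cat{B}(F-,b)$ lives in $[\cat{A}^\opp,\cat{V}]$. These agree only if the inclusion $\iota \colon \overline{\cat{A}} \to [\cat{A}^\opp,\cat{V}]$ preserves that colimit, which is not automatic for a full subcategory, and you cannot appeal to absoluteness of the weight $\cat{B}(F-,b)$, since its membership in $\overline{\cat{A}}$ is exactly what is not yet known (Lemma~\ref{lem:sfCD_Cauchy_completion} requires split-fullness). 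The repair is immediate and stays within your toolkit: absolute density (Theorem~\ref{thm:abs_dense}, or Theorem~\ref{thm:large_iff}) says $\Lan_F(GF) \cong G$ for \emph{any} functor $G$ out of $\cat{B}$, so apply it to the composite $G = \iota H$ rather than to $H$ itself; then $\iota H \cong \Lan_F(\iota H F) \cong \Lan_F(y_\cat{A}) \cong \cat{B}(F-,-)$, with every colimit now computed in $[\cat{A}^\opp,\cat{V}]$ where co-Yoneda applies. This yields the natural isomorphism $\iota H \cong N_F$ you want, and the rest of your argument ($H$ fully faithful, hence $F$ fully faithful from $HF \cong z$) goes through unchanged.
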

\begin{proof}
    First assume that $F$ is fully faithful and Cauchy dense. In the commutative diagram
    \[\begin{tikzcd}[column sep=3.5em]
    [\overline{\cat{B}},\cat{C}] \ar[d, "{[N_{N_F},\cat{C}]}"'] \ar[r, "{[z_\cat{B},\cat{C}]}"] &{} [\cat{B},\cat{C}] \ar[d, "{[F,\cat{C}]}"] \\{}
    [\overline{\cat{A}},\cat{C}] \ar[r, "{[z_\cat{A},\cat{C}]}"] &{} [\cat{A},\cat{C}],
    \end{tikzcd}\]
    the horizontal arrows are equivalences by Kelly and Schmitt~\cite[7.1]{Kelly2005}, and so is the left one by the proof of Corollary~\ref{cor:eq_CC}. Therefore, $[F,\cat{C}]$ must be an equivalence too.

    Conversely, consider the commutative diagram
    \[\begin{tikzcd}{}
    \cat{A} \ar[r, "y_\cat{A}"] \ar[d, "F"'] &{} [\cat{A}^\opp,\cat{V}] \ar[d, "\widehat{F}"] \\
    \cat{B} \ar[r, "y_\cat{B}"'] &{} [\cat{B}^\opp,\cat{V}],
    \end{tikzcd}\]
    where $\widehat{F} = \Lan_{y_\cat{A}} y_\cat{B} F$ is the essentially unique cocontinuous functor that makes the diagram commute. In such cases, $\widehat{F}$ always has a right adjoint, which sends $P \colon \cat{B}^\opp \to \cat{V}$ to $[\cat{B}^\opp,\cat{V}](y_\cat{B}F-,P)$ -- see e.g. Kelly~\cite[Thm~4.51]{Kelly1982}. By Yoneda, this right adjoint is just $[F^\opp,\cat{V}]$. Since $\cat{V}$ is complete, $\cat{V}^\opp$ is Cauchy complete, so $[F,\cat{V}^\opp]$ is an equivalence, and then so is $[F,\cat{V}^\opp]^\opp = [F^\opp,\cat{V}]$. Its left adjoint, $\widehat{F}$, must also be an equivalence. It follows that $F$ is fully faithful.
    
    Moreover, $\widehat{F}$ restricts to an equivalence between the full-subcategories of small-projective objects, i.e.\ an equivalence $\overline{F} \colon \overline{\cat{A}} \simeq \overline{\cat{B}}$. Since $\overline{F} z_\cat{A} \cong z_\cat{B} F$ is Cauchy dense and $z_\cat{B}$ is fully faithful, Proposition~\ref{prop:closure}\ref{part:GF_cd_G_ff} gives that $F$ is Cauchy dense.
\end{proof}

\section{Examples}\label{sec:ex}

This last section deals with examples and characterisations of Cauchy dense functors in different contexts.

\subsection{When \texorpdfstring{$\cat{V}_0$}{V0} is a preorder}\label{ssec:V-preorder}

In Proposition~\ref{prop:top_density1}, we saw that for metric spaces Cauchy density reduces to topological density. This condition quantifies over each point of the codomain, instead of over each pair of points of the codomain, as Cauchy density does. If $\cat{V}_0$ is a preorder, it turns out that one can always make this simplification.

\begin{proposition}\label{prop:V_preorder}
    Let $\cat{V}_0$ be a preorder. A $\cat{V}$-functor $F \colon \cat{A} \to \cat{B}$ is Cauchy dense iff $\eps_{b,b} \colon (F_* \otimes F^*)(b,b) \to \cat{B}(b,b)$ is an isomorphism for all $b \in \cat{B}$. Moreover, whether $F$ is Cauchy dense only depends on the image of its object mapping.
\end{proposition}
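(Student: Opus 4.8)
The plan rests on two simplifications that the preorder hypothesis on $\cat{V}_0$ affords. First, since there is at most one morphism between any two objects of $\cat{V}_0$, a morphism is invertible exactly when there is a morphism in the reverse direction (the two composites are then forced to be identities); in particular the canonical map $\eps_{b,b'} \colon (F_* \otimes F^*)(b,b') \to \cat{B}(b,b')$, which always exists, is an isomorphism iff there is a morphism $\cat{B}(b,b') \to (F_* \otimes F^*)(b,b')$, i.e.\ iff $\cat{B}(b,b') \leq (F_* \otimes F^*)(b,b')$ in $\cat{V}_0$. Second, a coend is a colimit, and colimits in a preorder reduce to joins, so
\[(F_* \otimes F^*)(b,b') = \int^a \cat{B}(Fa,b')\otimes\cat{B}(b,Fa) \cong \bigvee_a \cat{B}(Fa,b')\otimes\cat{B}(b,Fa).\]
Since the term $\cat{B}(Fa,b')\otimes\cat{B}(b,Fa)$ depends on $a$ only through the object $Fa$, and joins are idempotent, this join effectively ranges over the objects in the image of $F$; this already proves the final claim that Cauchy density depends only on the object mapping of $F$.

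The forward implication of the main equivalence is immediate, being the special case $b'=b$. For the converse I would imitate the metric argument of Proposition~\ref{prop:top_density1}. Assuming $\cat{B}(b,b) \leq (F_* \otimes F^*)(b,b)$ for all $b$, and fixing $b,b'$, I would chain inequalities: start from $\cat{B}(b,b') \cong I \otimes \cat{B}(b,b') \leq \cat{B}(b,b) \otimes \cat{B}(b,b')$ via the unit $j_b \colon I \to \cat{B}(b,b)$; apply the hypothesis to reach $\leq \bigl(\bigvee_a \cat{B}(Fa,b)\otimes\cat{B}(b,Fa)\bigr)\otimes\cat{B}(b,b')$; distribute $\otimes$ over the join, valid since $\otimes$ is cocontinuous in each variable as $\cat{V}$ is closed, to obtain $\bigvee_a \cat{B}(Fa,b)\otimes\cat{B}(b,Fa)\otimes\cat{B}(b,b')$; and in each summand use the symmetry of $\cat{V}$ together with the composite $M_\cat{B} \colon \cat{B}(b,b')\otimes\cat{B}(Fa,b) \to \cat{B}(Fa,b')$ to land in $\cat{B}(Fa,b')\otimes\cat{B}(b,Fa)$. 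Taking the join over $a$ gives $\cat{B}(b,b') \leq \bigvee_a \cat{B}(Fa,b')\otimes\cat{B}(b,Fa) \cong (F_* \otimes F^*)(b,b')$, so $\eps_{b,b'}$ is invertible. Inserting the unit and then composing is the exact analogue of inserting $B(fa,b)$ and applying the triangle inequality in the classical proof.

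The main thing to get right is the coend-to-join reduction: I would justify carefully that the defining coequalizer of the coend collapses, because in a preorder any parallel pair has its codomain as coequalizer, and that $\otimes$ preserves this colimit. Once that is in place, both assertions of the proposition become short order-theoretic manipulations, and crucially the preorder hypothesis lets me avoid ever verifying that the morphisms I construct are mutually inverse—their mere existence suffices.
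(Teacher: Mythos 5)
Your proof is correct and takes essentially the same route as the paper: the converse is established by producing a morphism $\cat{B}(b,b') \to (F_* \otimes F^*)(b,b')$ — insert the unit $j_b$, apply the invertibility of $\eps_{b,b}$, and compose via $M_\cat{B}$ — with the preorder hypothesis making the mere existence of this reverse morphism sufficient; the paper phrases the same chain diagrammatically rather than through the explicit coend-as-join reduction. Your treatment of the final claim (coends are suprema, hence depend only on the image of the object mapping) is also the paper's argument.
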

\begin{proof}
    The forward implication is clear. Now assume that $\eps_{b,b}$ is an isomorphism for all $b \in \cat{B}$. Let $b, b' \in \cat{B}$ and consider the following diagram in $\cat{V}_0$
    \[\begin{tikzcd}
    \int^a \cat{B}(Fa,b') \otimes \cat{B}(b,Fa) 
    \ar[r, "\eps_{b,b'}"] 
    & \cat{B}(b,b') 
    \ar[d, "1 \otimes j_b"] \\
    \int^a \cat{B}(b,b') \otimes \cat{B}(Fa,b) \otimes \cat{B}(b,Fa) 
    \ar[u, "\int^a M_\cat{B} \otimes 1"] \ar[r, "1 \otimes \eps_{b,b}"'] 
    & \cat{B}(b,b') \otimes \cat{B}(b,b).
    \end{tikzcd}\]
    By assumption, the bottom arrow has an inverse, so we can compose with it to get a morphism in the opposite direction to $\eps_{b,b'}$. Since $\cat{V}_0$ is a preorder, this suffices to show that $\eps_{b,b'}$ is an isomorphism.

    For the last statement, note that since $\cat{V}_0$ is a preorder, coends are simply suprema. Hence, whether $\eps_{b,b'}$ is an isomorphism only depends on the action of $F$ on objects.
\end{proof}

\begin{remark}
    This is manifestly not true when $\cat{V}_0$ is not a preorder. The diagram in the previous proof does not typically commute. For example, the identity-on-objects functor from the discrete category $\{\bot,\top\}$ to the preorder $\mathsf{2} = \{\bot < \top\}$ has $\eps_{b,b}$ an isomorphism for all $b \in \mathsf{2}$, but is not Cauchy dense. Nevertheless, there is a similar result for general $\cat{V}$ if one assumes that $F$ is split-full -- this is the content of Subsection~\ref{ssec:split-full}.
\end{remark}

\subsection{Between preorders}\label{ssec:preorders}
Let $\sf 2 = \{\bot < \top\}$ denote the walking arrow category, which is cartesian closed. A $\sf 2$-category is a preorder, and a $\sf 2$-functor $f \colon A \to B$ is an order preserving map.

\begin{proposition}
    A $\sf 2$-functor $f \colon A \to B$ is Cauchy dense iff it is essentially surjective.
\end{proposition}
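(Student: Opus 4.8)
The plan is to deduce this directly from Proposition~\ref{prop:V_preorder}, since the enriching category here is $\mathsf{2}$, and $\mathsf{2}_0 = \mathsf{2}$ is itself a preorder. That proposition immediately reduces the Cauchy density of $f$ to the condition that the diagonal counit component $\eps_{b,b} \colon (f_* \otimes f^*)(b,b) \to B(b,b)$ be an isomorphism for every $b \in B$, so the whole task becomes unwinding what this means inside $\mathsf{2}$.

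First I would compute the coend forming the domain of $\eps_{b,b}$. In $\mathsf{2}$ the monoidal product is conjunction ($\otimes = \wedge$) and colimits---hence coends---are joins. Therefore
\[(f_* \otimes f^*)(b,b) = \int^a B(fa,b) \otimes B(b,fa) = \bigvee_{a \in A} \big(B(fa,b) \wedge B(b,fa)\big),\]
which equals $\top$ precisely when there is some $a \in A$ with $B(fa,b) = \top$ and $B(b,fa) = \top$, i.e.\ with $fa \leq b$ and $b \leq fa$.

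Next I would observe that $B(b,b) = \top$ by reflexivity, and that a morphism in the preorder $\mathsf{2}$ is an isomorphism iff its domain and codomain agree. Since the codomain is already $\top$, the component $\eps_{b,b}$ is an isomorphism iff the join above equals $\top$, that is, iff some $a \in A$ satisfies $fa \cong b$ in $B$. Quantifying over all $b \in B$, this is exactly the assertion that every object of $B$ is isomorphic to one in the image of $f$, which is the definition of $f$ being essentially surjective. Combined with Proposition~\ref{prop:V_preorder}, this yields the claimed equivalence.

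There is no serious obstacle here: once Proposition~\ref{prop:V_preorder} has trimmed the problem down to the diagonal, everything is a routine translation between the lattice operations of $\mathsf{2}$ and the order-theoretic notions of join, isomorphism in a preorder, and essential surjectivity. The only point demanding a little care is the reading of ``isomorphism'' and ``essentially surjective'' inside a preorder---namely that $fa \cong b$ unpacks to the two inequalities $fa \leq b$ and $b \leq fa$---but this is bookkeeping rather than a genuine difficulty.
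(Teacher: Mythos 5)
Your proof is correct and follows the same route as the paper's: invoke Proposition~\ref{prop:V_preorder} to reduce to the diagonal components, unwind the coend in $\mathsf{2}$ as the join $\bigvee_a [(fa \leq b) \wedge (b \leq fa)]$, and identify the resulting condition with essential surjectivity. The extra details you spell out (reflexivity giving $B(b,b) = \top$, and isomorphism in $\mathsf{2}$ meaning equality) are exactly what the paper leaves implicit.
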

\begin{proof}
    Using Proposition~\ref{prop:V_preorder}, the condition that $f$ be Cauchy dense translates in this context to 
    \[\bigvee_a [(fa \leq b) \wedge (b \leq fa)] = \top \qquad \forall b \in B.\]
    This is equivalent to there being some $a \in A$ such that $fa \cong b$, in the sense that $fa \leq b$ and $b \leq fa$.
\end{proof}

Of course, one can also view preorders as ordinary categories, instead of $\sf 2$-categories. This is thanks to the strong monoidal functor $\mathsf{2}(\top,-) \colon \mathsf{2} \to \Set$, which gives an inclusion $\mathsf{2}\hy\Cat \to \Cat$. For ordinary functors between preorders in $\Cat$, Cauchy density is a stronger condition than for $\mathsf{2}$-functors.

\begin{proposition}
    An ordinary functor $f \colon A \to B$ between preorders is Cauchy dense iff
    \[\pi_0 \{ a \in A \mid b \leq fa \leq b' \} = \begin{cases}
        1 & \text{if } b \leq b', \\
        \varnothing & \text{otherwise.}
    \end{cases}\]
\end{proposition}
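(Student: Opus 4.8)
The plan is to reduce the Cauchy density of an ordinary functor $f \colon A \to B$ between preorders to a concrete statement about the coend defining $\eps_{b,b'}$, and then to unpack that coend in $\Set$. Since we are now working with $\cat{V} = \Set$, the object $\cat{B}(b,b')$ is either the one-point set $1$ (when $b \leq b'$) or the empty set $\varnothing$ (otherwise), because $B$ is a preorder. The key quantity is the coend
\[\int^a \cat{B}(fa,b') \otimes \cat{B}(b,fa) = \int^a B(fa,b') \times B(b,fa),\]
and $f$ is Cauchy dense precisely when the composition-induced map from this coend to $B(b,b')$ is a bijection for all $b,b'$.

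First I would identify the coend explicitly. In $\Set$, a coend $\int^a X(a)$ of a functor $X \colon A^\opp \otimes A \to \Set$ is the set of equivalence classes of $\coprod_a X(a,a)$ under the relation generated by the action of morphisms of $A$; here $X(a,a') = B(fa',b') \times B(b,fa)$, but the diagonal summands $X(a,a) = B(fa,b') \times B(b,fa)$ are nonempty exactly when $b \leq fa \leq b'$, in which case each factor is a singleton and $X(a,a) \cong 1$. So the coend is the set of connected components $\pi_0$ of the full subcategory of $A$ on those objects $a$ with $b \leq fa \leq b'$: two such objects $a, a'$ are identified in the coend whenever there is a morphism between them in $A$ (the coend relation is exactly the one generated by morphisms, and since every factor is a singleton the identifications are purely at the level of objects). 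This gives the identification of the domain of $\eps_{b,b'}$ with $\pi_0\{a \in A \mid b \leq fa \leq b'\}$.

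Next I would analyse when $\eps_{b,b'}$ is a bijection onto $B(b,b')$. If $b \not\leq b'$, then $B(b,b') = \varnothing$, so $\eps_{b,b'}$ is a bijection iff its domain is empty, i.e.\ iff there is no $a$ with $b \leq fa \leq b'$ — equivalently $\pi_0\{a \mid b \leq fa \leq b'\} = \varnothing$. (Note that any such $a$ would force $b \leq b'$ by transitivity, so this case is automatic, but stating it via the formula is what the proposition asserts.) If $b \leq b'$, then $B(b,b') = 1$, so $\eps_{b,b'}$ is a bijection iff the coend is a one-point set, i.e.\ iff $\pi_0\{a \mid b \leq fa \leq b'\}$ has exactly one element. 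Collecting the two cases gives precisely the stated two-line formula.

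The main obstacle I anticipate is justifying the identification of the coend with $\pi_0$ carefully: one must check that the coend relation on $\coprod_a X(a,a)$ is generated exactly by the morphisms of $A$ acting on the (singleton) factors, and that no further collapsing or separation occurs. This is where the preorder hypothesis does real work — because each hom-set $B(fa,b')$ and $B(b,fa)$ is either empty or a singleton, the wedge condition in the coend degenerates to identifying the classes of $a$ and $a'$ whenever a morphism $a \to a'$ exists in the relevant subcategory, which is exactly the generating relation for $\pi_0$. Once this bookkeeping is done, reading off the two cases from the size of $B(b,b')$ is routine and the proof concludes quickly.
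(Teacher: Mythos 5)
Your proposal is correct and follows essentially the same route as the paper: both unpack the coend $\int^a B(fa,b') \times B(b,fa)$ in $\Set$ (you via the quotient description, the paper via the coequaliser and its category of elements), use that hom-sets in a preorder are subsingletons to identify it with $\pi_0\{a \mid b \leq fa \leq b'\}$ as a full subcategory of $A$, and then split into the cases $b \leq b'$ and $b \not\leq b'$. Your bookkeeping of the coend relation, including the observation that the generating identifications occur exactly along morphisms of $A$ between objects of that subcategory, matches the paper's connectedness argument.
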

\begin{proof}
    Using the computation of coends in $\Set$, we see that $f$ is Cauchy dense iff the obvious diagram
    \[\begin{tikzcd}
    \displaystyle\bigsqcup_{a \leq a'} B(fa',b') \times B(b,fx)
    \ar[r, shift left] \ar[r, shift right] &
    \displaystyle\bigsqcup_a B(fa, b') \times B(b,fa) 
    \ar[r] & 
    B(b,b'),
    \end{tikzcd}\]
    is a coequaliser diagram for all $b,b' \in B$. If $b \not\leq b'$, then the diagram is constant at $\varnothing$, so it is always a coequaliser diagram. Otherwise, note that the category of elements of the parallel arrows has an object $\underline{a}$ whenever $b \leq fa \leq b'$, an object $\underline{aa'}$ whenever $a \leq a'$, $b \leq fa$ and $fa' \leq b'$, and nonidentity morphisms $\underline{aa'} \to \underline{a}$ and $\underline{aa'} \to \underline{a'}$ for each object of the second type. The coequaliser of the parallel arrows is a singleton precisely when this category is connected, which happens precisely when $\{a \in A \mid b \leq fa \leq b'\}$ is connected as a full subcategory of $A$.
\end{proof}

It follows that Cauchy density is not preserved under base change in general.

\begin{example}\label{ex:discrete_CD}
    The Cauchy dense functors between discrete (ordinary) categories are the bijective-on-objects ones.
\end{example}

\subsection{Between monoids}

Let $\cat{V} = \Set$ and $f \colon A \to B$ be a monoid homomorphism, seen as a functor between one-object categories. Then $f$ is Cauchy dense iff the multiplication of $B$ induces a bijection
\[\begin{tikzcd}
\displaystyle\frac{B \times B}{\ang{(bf(a),b') \sim_f (b, f(a)b') \mid a \in A}} \ar[r, "\alpha"] & B,
\end{tikzcd}\]
where this is the quotient of $B \times B$ by the equivalence relation $\approx_f$ generated by the relation $\sim_f$ with $(bf(a), b') \sim_f (b,f(a)b')$ for all $a \in A$. This function is always surjective, because composition $B \times B \to B$ is surjective. Hence, $f$ is Cauchy dense iff $\alpha$ is injective. In fact, we can simplify this condition even further.

Note that $\sim_f$ is already reflexive (take $a = 1$) and transitive (because we can compose in $A$). Moreover, if $(b_0,b_0') \sim_f (b_1,b_1')$ then $(cb_0,b_0'd) \sim_f (cb_1,b_1'd)$, with all variables in $B$, which easily implies that the same is true with $\sim_f$ replaced by $\approx_f$. 

\begin{proposition}\label{prop:monoid_CD_iff}
    Let $f \colon A \to B$ be a monoid homomorphism. Then $f$ is Cauchy dense iff $(b,1) \approx_f (1,b)$ for all $b \in B$, with the notation above.
\end{proposition}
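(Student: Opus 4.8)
The plan is to work entirely with the congruence property of $\approx_f$ recorded just before the statement: if $(x,y) \approx_f (x',y')$ then $(cx, yd) \approx_f (cx', y'd)$ for all $c,d \in B$. Recall also that, by the preceding discussion, $\alpha$ is automatically surjective, so Cauchy density is equivalent to the injectivity of $\alpha$, i.e.\ to the statement that $b_0 b_0' = b_1 b_1'$ forces $(b_0,b_0') \approx_f (b_1,b_1')$. The forward direction is then a single line: if $f$ is Cauchy dense then $\alpha$ is injective, and since
\[\alpha([(b,1)]) = b \cdot 1 = b = 1 \cdot b = \alpha([(1,b)]),\]
injectivity immediately gives $(b,1) \approx_f (1,b)$ for every $b \in B$.

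For the converse, the key idea is to use the hypothesis to reduce every pair to the canonical form $(bb',1)$. Starting from $(b',1) \approx_f (1,b')$ and applying the congruence property with left multiplication by $b$ on the first coordinate and no multiplication on the second, I obtain $(bb',1) \approx_f (b,b')$. Thus the class of $(b,b')$ depends only on the product $bb'$, so whenever $b_0 b_0' = b_1 b_1'$, transitivity of $\approx_f$ yields $(b_0,b_0') \approx_f (b_0 b_0',1) = (b_1 b_1',1) \approx_f (b_1,b_1')$. Hence $\alpha$ is injective, and therefore bijective, which is exactly Cauchy density.

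I do not expect a genuine obstacle; the whole content lies in the reduction of the previous paragraph, which is an immediate consequence of the congruence property combined with the hypothesis. The only point requiring care is to apply that property with the correct sidedness -- left multiplication on the first coordinate and right multiplication on the second -- so that it transports $(b',1) \approx_f (1,b')$ to precisely $(bb',1) \approx_f (b,b')$ and not to some other pair.
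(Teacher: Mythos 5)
Your proof is correct and follows essentially the same route as the paper's: both directions match exactly, including the one-line forward implication via $\alpha[b,1]=b=\alpha[1,b]$ and the converse via the congruence property to get $(bb',1)\approx_f(b,b')$ followed by the transitivity chain through $(b_0b_0',1)=(b_1b_1',1)$. No gaps.
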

\begin{proof}
    Since $\alpha[b,1] = b = \alpha[1,b]$, the forward implication is clear. Now, assuming $(b,1) \approx_f (1,b)$ for all $b \in B$, we show that $\alpha$ is injective. Let $b_0,b_0',b_1,b_1' \in B$ be such that $\alpha[b_0,b_0'] = \alpha[b_1,b_1']$, i.e.\ $b_0b_0' = b_1b_1'$. By the previous paragraph, $(b_0',1) \approx_f (1,b_0')$ implies $(b_0b_0',1) \approx_f (b_0,b_0')$ and, similarly, we also have $(b_1b_1',1) \approx_f (b_1,b_1')$. Then
    \[(b_0,b_0') \approx_f (b_0b_0',1) = (b_1b_1',1) \approx_f (b_1,b_1')\]
    as needed.
\end{proof}

If $A$ is a group, then $\sim_f$ is already symmetric, since
\[(b,f(a)b') = (b f(a)f(a^{-1}), f(a)b') \sim_f (bf(a),f(a^{-1})f(a) b') = (bf(a),b')\]
for all $b, b' \in B$ and $a \in A$. In this case, $\sim_f$ and $\approx_f$ coincide. Then $f$ is Cauchy dense iff $(b,1) \sim_f (1,b)$ for all $b \in B$. If $(b,1) \sim_f (1,b)$, then there is some $a \in A$ such that $f(a)1 = b$. Hence:

\begin{corollary}\label{cor:group_CD}
    Let $f \colon A \to B$ be a monoid homomorphism with $A$ a group. Then $f$ is Cauchy dense iff it is surjective. If this is the case, then $B$ is also a group.
\end{corollary}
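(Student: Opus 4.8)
The goal is Corollary~\ref{cor:group_CD}: for a monoid homomorphism $f \colon A \to B$ with $A$ a group, $f$ is Cauchy dense iff $f$ is surjective, and in that case $B$ is a group.

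The plan is to build directly on Proposition~\ref{prop:monoid_CD_iff} and the observations in the paragraph immediately preceding the corollary. First I would establish that when $A$ is a group, the generating relation $\sim_f$ is already symmetric, so that $\sim_f$ and $\approx_f$ coincide; the displayed computation
\[(b, f(a)b') = (b f(a) f(a^{-1}), f(a) b') \sim_f (b f(a), f(a^{-1}) f(a) b') = (b f(a), b')\]
does exactly this, using that $f(a^{-1}) f(a) = f(1) = 1$ since $f$ is a homomorphism and $A$ is a group. Combined with the earlier remark that $\sim_f$ is always reflexive and transitive, this shows $\sim_f$ is an equivalence relation, hence equals $\approx_f$.

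With that identification, Proposition~\ref{prop:monoid_CD_iff} says $f$ is Cauchy dense iff $(b,1) \sim_f (1,b)$ for all $b \in B$. The next step is to unwind what $(b,1) \sim_f (1,b)$ means: by the definition of the generating relation, $(b,1) \sim_f (1,b)$ holds precisely when there is some $a \in A$ with $b = b \cdot 1 = 1 \cdot f(a) = f(a)$ (matching the two pairs through a single generator). So $(b,1) \sim_f (1,b)$ for all $b$ says exactly that every $b \in B$ is of the form $f(a)$, i.e.\ that $f$ is surjective. This gives the first equivalence. I should double-check the direction where surjectivity produces the relation: if $b = f(a)$ then indeed $(b,1) = (f(a), 1) \sim_f (1, f(a)) = (1,b)$, and conversely $(b,1) \sim_f (1,b)$ forces $b$ into the image.

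Finally, for the claim that $B$ is a group when $f$ is surjective, the quick argument is that a surjective homomorphic image of a group is a group: $B = f(A)$, and inverses and the group identity are preserved by $f$, so $B$ inherits a group structure with $f(a)^{-1} = f(a^{-1})$. I expect the only point needing mild care is the unwinding of $(b,1) \sim_f (1,b)$ in terms of a single generator rather than a chain --- but since we have shown $\sim_f$ is already an equivalence relation (so no chains are needed to reach $\approx_f$), this is immediate. The main conceptual step, and the one doing all the work, is the symmetry computation collapsing $\approx_f$ to $\sim_f$; everything after it is a short translation.
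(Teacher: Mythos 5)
Your proposal is correct and takes essentially the same route as the paper, which proves this corollary in the paragraph immediately preceding its statement: the symmetry computation collapses $\approx_f$ to $\sim_f$ when $A$ is a group, Proposition~\ref{prop:monoid_CD_iff} then reduces Cauchy density to $(b,1) \sim_f (1,b)$ for all $b \in B$, and unwinding the single-generator relation yields exactly surjectivity. Your closing argument that $B$ is a group (as a surjective homomorphic image of a group, with $f(a)^{-1} = f(a^{-1})$) is the standard fact the paper leaves implicit.
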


For general monoids, we do not have such a simple characterisation. The inclusion of additive monoids $\N \to \Z$ is Cauchy dense, as is any homomorphism $f \colon A \to B$ where $B$ is generated under multiplication by elements of the form $f(a)$ and $f(a)^{-1}$ with $a \in A$. We do however have:

\begin{proposition}\label{prop:monoid_CD_implies_epi}
    Let $f \colon A \to B$ be a monoid homomorphism. If $f$ is Cauchy dense, then $f$ is an epimorphism in $\Mon$.
\end{proposition}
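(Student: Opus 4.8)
The plan is to use the characterisation of Cauchy density from Proposition~\ref{prop:monoid_CD_iff}. Recall that $f$ is an epimorphism in $\Mon$ precisely when any two monoid homomorphisms $g, h \colon B \to C$ satisfying $gf = hf$ must be equal. So I would fix such a pair $g, h$ and aim to show that $g(b) = h(b)$ for all $b \in B$.

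The key idea is to cook up a quantity on $B \times B$ that is invariant under $\approx_f$ and that evaluates to $g(b)$ on $(b,1)$ and to $h(b)$ on $(1,b)$. The natural candidate is $\Phi(c,d) = g(c)\,h(d) \in C$. I would first check that $\Phi$ is constant on $\approx_f$-classes. Since $\approx_f$ is generated by $\sim_f$, it suffices to verify invariance under a single generating step $(c f(a), d) \sim_f (c, f(a) d)$. Unwinding, the left-hand pair gives $g(c)\,g(f(a))\,h(d)$ and the right-hand pair gives $g(c)\,h(f(a))\,h(d)$; these agree because $gf = hf$ forces $g(f(a)) = h(f(a))$. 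Invariance then propagates along arbitrary zigzags by symmetry and transitivity of $\approx_f$.

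With this in hand, Cauchy density finishes the argument at once: by Proposition~\ref{prop:monoid_CD_iff} we have $(b,1) \approx_f (1,b)$ for every $b \in B$, whence $\Phi(b,1) = \Phi(1,b)$, that is $g(b) = h(b)$. As $b$ is arbitrary, $g = h$, and so $f$ is an epimorphism.

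I do not anticipate a genuine obstacle here; the only real content is spotting the invariant $\Phi(c,d) = g(c)\,h(d)$, after which everything reduces to the single-step computation together with the hypothesis $gf = hf$. If anything, the point to handle with a little care is the bookkeeping that $\Phi$ descends to the quotient -- that invariance under the directed relation $\sim_f$ indeed yields invariance under the full equivalence relation $\approx_f$ -- but this is immediate once invariance under one step is established.
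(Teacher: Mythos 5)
Your proof is correct and is essentially the paper's own argument: your invariant $\Phi(c,d) = g(c)h(d)$ is exactly the composite of the two maps the paper uses (descending $g \times h$ to a map $(B \times B)/{\approx_f} \to (C \times C)/{\approx_{gf}}$ and then applying the multiplication of $C$), and both proofs conclude by evaluating at $(b,1) \approx_f (1,b)$, which Proposition~\ref{prop:monoid_CD_iff} supplies from Cauchy density. The only difference is cosmetic: the paper phrases its proof as a stronger claim in which $g$ and $h$ are mere functions with $gf = hf$ a homomorphism, whereas your one-step invariance check uses that $g$ and $h$ are homomorphisms (as does, implicitly, the paper's descent computation), so your version proves exactly the proposition as stated.
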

\begin{proof}
    We will in fact prove something stronger: for any two functions $g,h \colon \begin{tikzcd}[cramped, column sep=small] B \ar[r, shift left] \ar[r, shift right] & C \end{tikzcd}$ where $C$ is a monoid and $gf = hf$ is a monoid homomorphism, we have $g = h$. The function $g \times h \colon B \times B \to C \times C$ descends to a function $(B \times B)/{\approx_f} \to (C \times C)/{\approx_{gf}}$, because
    \[(g(b)gf(a),h(b')) \sim_{gf} (g(b), gf(a)h(b')) = (g(b), hf(a)h(b'))\]
    for all $b, b' \in B$ and $a \in A$. If $f$ is Cauchy dense, then $(b,1) \approx_f (1,b')$, and then $(g(b),1) \approx_{gf} (1,h(b))$ for all $b \in B$. The function $(C \times C)/{\approx_{gf}} \to C$ induced by the multiplication of $C$ then gives $g(b) = h(b)$, as needed.
\end{proof}

\begin{remark}
    The previous proposition does not immediately follow from the fact that Cauchy dense ordinary functors are lax epimorphisms in $\Cat$. That only ensures that if $g,h \colon \begin{tikzcd}[cramped, column sep=small] B \ar[r, shift left] \ar[r, shift right] & C \end{tikzcd} \in \Mon$ are such that $gf = hf$, then there is an invertible $u \in C$ such that $ug(b) = h(b)u$ for all $b \in B$.
\end{remark}

\noindent
\textbf{Open problem.} Is every epimorphism in $\Mon$ Cauchy dense?

\vspace{7pt}
This seems unlikely to the author, but a counterexample has not been found. It would for example imply that every monoid epimorphism has the stronger property in the proof of Proposition~\ref{prop:monoid_CD_implies_epi}.

\subsection{Connected components}

This subsection focuses once again on the $\cat{V} = \Set$ case. In particular, we will show that a Cauchy dense ordinary functor induces a bijection between the sets of connected components. In this subsection, we will write $\pi_0 \colon \Cat \to \Set$ for the connected components functor.

\begin{theorem}\label{thm:pi_0}
    Let $F \colon \cat{A} \to \cat{B}$ be a Cauchy dense ordinary functor. Then $\pi_0 F$ is bijective.
\end{theorem}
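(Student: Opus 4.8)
The plan is to show that $F$ is a \emph{final} functor (in the classical, unenriched sense), and then read off the result by applying the comparison isomorphism for final functors to the constant diagram. Recall that $F$ is final precisely when, for every $b \in \cat{B}$, the comma category $b \downarrow F$ is connected and nonempty; equivalently, when the colimit of $\cat{B}(b, F-) \colon \cat{A} \to \Set$ is a singleton, since the category of elements of this functor is exactly $b \downarrow F$ and the colimit of a $\Set$-valued functor is $\pi_0$ of its category of elements. Granting finality, the canonical comparison map $\operatorname{colim}_\cat{A} GF \to \operatorname{colim}_\cat{B} G$ is an isomorphism for every $G \colon \cat{B} \to \Set$; taking $G$ to be the constant functor at the singleton turns this into the statement that $\pi_0 F \colon \pi_0\cat{A} \to \pi_0\cat{B}$ is a bijection, as desired. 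Note that this single argument settles both halves at once: surjectivity of $\pi_0 F$ reflects the nonemptiness of each $b \downarrow F$ (every component of $\cat{B}$ meets the image of $F$), and injectivity reflects their connectedness.

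To establish finality I would apply $\operatorname{colim}_{b'}$, the colimit over $b' \in \cat{B}$, to both sides of the defining isomorphism of Cauchy density,
\[\int^a \cat{B}(Fa, b') \otimes \cat{B}(b, Fa) \cong \cat{B}(b, b').\]
On the right, $\operatorname{colim}_{b'} \cat{B}(b, b')$ is $\pi_0$ of the coslice $b \downarrow \cat{B}$, which is a singleton because that category has the initial object $(b, 1_b)$. On the left, colimits commute with colimits, so $\operatorname{colim}_{b'}$ passes inside the coend over $a$; since $b'$ occurs only in the factor $\cat{B}(Fa, b')$ and $\operatorname{colim}_{b'} \cat{B}(Fa, b')$ is again a singleton (the coslice $Fa \downarrow \cat{B}$ has initial object $(Fa, 1_{Fa})$), the contravariant dependence of the integrand on $a$ becomes constant. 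The coend over $a$ therefore collapses to the conical colimit of the covariant functor $\cat{B}(b, F-)$. Comparing the two sides gives $\operatorname{colim}_a \cat{B}(b, Fa) \cong 1$ for every $b$, which is exactly finality.

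The step requiring care is the interchange of $\operatorname{colim}_{b'}$ with the coend $\int^a$, and in particular the verification that, once the $b'$-colimit of $\cat{B}(Fa, b')$ has been contracted to a point, the remaining coend genuinely reduces to $\operatorname{colim}_a \cat{B}(b, Fa) = \pi_0(b \downarrow F)$ and not to some further quotient. Concretely, for a functor $K \colon \cat{A}^\opp \otimes \cat{A} \to \Set$ that is constant in its contravariant variable, writing $K(a_-, a_+) = L(a_+)$, one checks directly from the coequaliser presentation of the coend that the contravariant leg acts as the identity while the covariant leg acts by $L$, so that $\int^a K(a, a) \cong \operatorname{colim}_a L$. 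This is the crux of the bookkeeping; it is a routine Fubini-type interchange (using that $\times X$ is a left adjoint in $\Set$ to pull the constant factor out of the $b'$-colimit), but it is the point that deserves to be spelled out in full.
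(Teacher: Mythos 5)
Your proof is correct, but it takes a genuinely different route from the paper's. You establish that a Cauchy dense ordinary functor is \emph{final}: applying $\mathrm{colim}_{b'}$ to the natural isomorphism $\int^a \cat{B}(Fa,b') \times \cat{B}(b,Fa) \cong \cat{B}(b,b')$, using that colimits commute with coends, that $- \times X$ preserves colimits, that corepresentables have singleton colimits, and your (correctly verified) observation that a coend whose integrand is constant in the contravariant variable reduces to a conical colimit, you get $\mathrm{colim}_a\, \cat{B}(b,Fa) \cong 1$, i.e.\ $b \downarrow F$ is nonempty and connected for every $b$; then the standard comparison isomorphism for final functors, evaluated at the constant diagram, gives that $\pi_0 F$ is a bijection. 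The paper instead argues combinatorially: it first treats bijective-on-objects functors by decomposing the coend as a disjoint union over $\pi_0\cat{A}$ and chasing equivalence classes (the classes $[f,1_{Fa}]$ and $[1_{Fa'},f]$ must coincide when $\eps$ is injective, forcing $a$ and $a'$ to be connected), and then handles the general case through the bijective-on-objects/fully-faithful factorisation, invoking Proposition~\ref{prop:closure} and Theorem~\ref{thm:ffCD} to see that the fully faithful part is an inclusion into a partial idempotent completion, which leaves $\pi_0$ unchanged. Your argument is shorter and self-contained — it needs none of the machinery of Section~\ref{sec:ffCD} — and it proves a stronger intermediate statement of independent interest: Cauchy dense ordinary functors are final (hence also initial, by Proposition~\ref{prop:self-dual}). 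The paper's proof, on the other hand, yields the explicit component-level information (morphisms $Fa \to Fa'$ exist only when $a$ and $a'$ are already connected) that is then reused in Corollary~\ref{cor:disj_union}, and it illustrates how the structural results of the paper interlock.
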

\begin{proof}
    First, assume $F$ is bijective-on-objects, and write $F^{-1}$ for the inverse of its object function. Let $b, b' \in \cat{B}$. The coend $(F_* \otimes F^*)(b,b')$ can be written as a disjoint union of coends, one for each connected component of $\cat{A}$:
    \begin{equation}\label{eq:disjoint}
        (F_* \otimes F^*)(b,b') = \bigsqcup_{C \in \pi_0 \cat{A}} \int^{a \in C} \cat{B}(Fa,b') \times \cat{B}(b,Fa).
    \end{equation}
    (A quick way to justify this is that the functor that sends a category $\cat{C}$ to the opposite of its subdivision category, in the language of Mac Lane~\cite[p.~224]{MacLane1998}, preserves coproducts.)

    We will show that $a$ and $a'$ are connected in $\cat{A}$ iff $Fa$ and $Fa'$ are connected in $\cat{B}$. The forward implication is true for any functor. Now suppose there is a morphism $f \colon Fa \to Fa'$ in $\cat{B}$. We have two equivalence classes $[f,1_{Fa}]$ and $[1_{Fa'},f]$ in $(F_* \otimes F^*)(Fa,Fa')$ which are mapped to $f$ by $\eps_{Fa,Fa'}$. Since $F$ is Cauchy dense, they must actually be the same equivalence class. In particular, $[f,1_{Fa}]$ and $[1_{Fa'},f]$ must lie in the same component of the disjoint union in~\eqref{eq:disjoint}, i.e.\ $a$ and $a'$ are connected. If instead of a morphism $Fa \to Fa'$ there is a zig-zag of morphisms, one can apply the same argument to each morphism in the zig-zag to conclude that $a$ and $a'$ are connected.

    Since $F$ is bijective-on-objects, it follows that $\pi_0F$ is bijective.
    
    Now we relax the assumption that $F$ is bijective-on-objects. Write $\cat{A} \xrightarrow{G} \im F \xrightarrow{H} \cat{B}$ for the factorisation of $F$ into a bijective-on-objects functor followed by a fully faithful one. By Proposition~\ref{prop:closure}\ref{part:GF_cd_G_ff}, since $H$ is fully faithful, $G$ is Cauchy dense, and then $H$ is Cauchy dense by part~\ref{part:GF_F_cd} of the same proposition. We have already shown that $\pi_0 G$ is bijective. By Theorem~\ref{thm:ffCD}, $\cat{B}$ is a full subcategory of the idempotent completion $\overline{\im F}$ of $\im F$ containing $\im F$ itself. In other words, the objects of $\cat{B}$ include all of $\im F$ and the splitting of some (but possibly not all) idempotents in $\im F$. Splitting certain idempotents does not change the connected components of $\im F$, so we conclude that $\pi_0 H$ is bijective.
\end{proof}

\begin{corollary}\label{cor:disj_union}
    An ordinary Cauchy dense functor $F \colon \cat{A} \to \cat{B}$ can be written as $\sqcup_i F_i \colon \sqcup_i \cat{A}_i \to \sqcup_i \cat{B}_i$ where each $F_i$ is Cauchy dense and each $\cat{A}_i$ and $\cat{B}_i$ are connected. Conversely, a functor of the form $\sqcup_i G_i$ is Cauchy dense iff each $G_i$ is Cauchy dense.
\end{corollary}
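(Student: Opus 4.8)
The plan is to reduce both assertions of the corollary to a single computation of how the counit $\eps$ of the profunctor adjunction behaves for a functor that is a coproduct, and then feed in Theorem~\ref{thm:pi_0}. Write $F = \sqcup_i G_i \colon \sqcup_i \cat{A}_i \to \sqcup_i \cat{B}_i$ with $G_i \colon \cat{A}_i \to \cat{B}_i$, and fix objects $b \in \cat{B}_{i_0}$ and $b' \in \cat{B}_{j_0}$ of the codomain. Exactly as in the decomposition~\eqref{eq:disjoint} used in the proof of Theorem~\ref{thm:pi_0}, a coend over the coproduct category $\sqcup_i \cat{A}_i$ is the coproduct of the coends over each factor, so
\[
(F_* \otimes F^*)(b,b') = \bigsqcup_i \int^{a \in \cat{A}_i} \cat{B}(Fa, b') \times \cat{B}(b, Fa).
\]
First I would observe that in the $i$-th summand one has $Fa = G_i a \in \cat{B}_i$, so $\cat{B}(b, Fa)$ is empty unless $i = i_0$ and $\cat{B}(Fa, b')$ is empty unless $i = j_0$, every hom-set between distinct factors of a coproduct of categories being empty.

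From this the two cases are immediate. If $i_0 \neq j_0$ then every summand vanishes, so the coend is empty, as is $\cat{B}(b,b')$, and $\eps_{b,b'}$ is trivially a bijection. If $i_0 = j_0 = i$, only the $i$-th summand survives and it equals $((G_i)_* \otimes (G_i)^*)(b,b')$; moreover $\eps^F_{b,b'}$ is induced by composition in $\cat{B}$, which restricts to composition in $\cat{B}_i$, so under this identification it coincides with $\eps^{G_i}_{b,b'}$. Hence $\eps^F$ is an isomorphism iff $\eps^{G_i}_{b,b'}$ is an isomorphism for every $i$ and every $b,b' \in \cat{B}_i$, i.e.\ iff each $G_i$ is Cauchy dense. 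This is precisely the second (``converse'') assertion.

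For the first assertion I would take $\cat{A}_i$ and $\cat{B}_i$ to be the connected components of $\cat{A}$ and $\cat{B}$, using that every small category is canonically the coproduct of its connected components. A functor carries each component of $\cat{A}$ into a single component of $\cat{B}$, and by Theorem~\ref{thm:pi_0} the induced map $\pi_0 F$ is a bijection; this lets me index both families by the common set $I = \pi_0\cat{A} \cong \pi_0\cat{B}$ and write $F = \sqcup_{i \in I} F_i$ with $F_i \colon \cat{A}_i \to \cat{B}_i$ between connected categories. Applying the computation of the previous paragraph, the Cauchy density of $F$ forces each $F_i$ to be Cauchy dense, yielding the claimed decomposition.

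The whole argument lives in the displayed coend identity, which is no harder than~\eqref{eq:disjoint}, so I do not anticipate a genuine obstacle. The only point requiring any care is the vanishing of the cross-component summands, and this is immediate from the absence of morphisms between objects in different factors of a coproduct of categories. In effect the corollary is a bookkeeping consequence of Theorem~\ref{thm:pi_0} together with the additivity of the profunctor counit over connected components.
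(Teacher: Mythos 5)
Your proposal is correct and takes essentially the same route as the paper: use Theorem~\ref{thm:pi_0} to split $F$ over connected components, and use the coend decomposition~\eqref{eq:disjoint} together with the emptiness of hom-sets between distinct components to identify $\eps^F$ with the counits $\eps^{F_i}$ of the restrictions. You merely spell out the steps the paper dismisses as ``easy to see'' and ``clear'' (the vanishing of cross-component summands and the trivially bijective empty case), which is a fair expansion rather than a different argument.
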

\begin{proof}
    Since $\pi_0 F$ is a bijection, $F = \sqcup_{i \in \pi_0 \cat{A}} F_i$ where $F_i$ is the restriction of $F$ to $i \in \pi_0 \cat{A}$ factorised through $\pi_0 F(i) \in \pi_0 \cat{B}$. It is easy to see that each $F_i$ is Cauchy dense if $F$ is from~\eqref{eq:disjoint}. Similarly, it is clear that a disjoint union of Cauchy dense functors is Cauchy dense.
\end{proof}

\begin{proposition}
    A functor whose domain is a groupoid is Cauchy dense iff it is equivalent to a disjoint union of surjective group homomorphisms.
\end{proposition}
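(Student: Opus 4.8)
The plan is to reduce to connected groupoids and then apply Corollary~\ref{cor:group_CD}. By Corollary~\ref{cor:disj_union} everything can be handled componentwise: any Cauchy dense $F$ splits as $\sqcup_i F_i$ with each $\cat{A}_i$ and $\cat{B}_i$ connected and each $F_i$ Cauchy dense, and conversely a disjoint union is Cauchy dense iff each summand is. Since a connected component of a groupoid is again a connected groupoid, and since equivalences and surjective group homomorphisms are Cauchy dense (by Proposition~\ref{prop:Cauchy_dense_adjoint} and Corollary~\ref{cor:group_CD}), the backward implication is immediate: anything equivalent to a disjoint union of surjective group homomorphisms is, up to isomorphism, a composite of Cauchy dense functors, hence Cauchy dense by Proposition~\ref{prop:closure}\ref{part:F_G_cd}. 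It therefore remains to prove that a Cauchy dense $F \colon \cat{A} \to \cat{B}$ with $\cat{A}$ a connected groupoid is equivalent to a surjective group homomorphism, and to reassemble over $\pi_0$ at the end.

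First I would cut $\cat{A}$ down to a single object. Fixing $a_0 \in \cat{A}$, the inclusion $\iota$ of the one-object full subcategory $\cat{A}(a_0,a_0) \hookrightarrow \cat{A}$ is an equivalence, hence fully faithful and Cauchy dense, so $F\iota$ is Cauchy dense by Proposition~\ref{prop:closure}\ref{part:F_G_cd}. Writing $b_0 = Fa_0$, the functor $F\iota$ factors as $\cat{A}(a_0,a_0) \xrightarrow{c} \cat{B}(b_0,b_0) \xrightarrow{j} \cat{B}$, where $j$ is the inclusion of the one-object full subcategory on $b_0$ and is therefore fully faithful. Proposition~\ref{prop:closure}\ref{part:GF_cd_G_ff} then gives that $c$ is Cauchy dense, and since it is a monoid homomorphism out of a group, Corollary~\ref{cor:group_CD} shows that $c$ is surjective and that $\cat{B}(b_0,b_0)$ is a group.

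The main obstacle is to upgrade this to an equivalence $\cat{B} \simeq \cat{B}(b_0,b_0)$, that is, to show $\cat{B}$ is a connected groupoid so that $j$ is an equivalence. For this I would use the factorisation $\cat{A} \xrightarrow{G} \im F \xrightarrow{H} \cat{B}$ into a bijective-on-objects functor followed by a fully faithful one, both Cauchy dense as in the proof of Theorem~\ref{thm:pi_0}. The category $\im F$ has the objects of $\cat{A}$ and full hom-sets $\cat{B}(Fa,Fa')$; all its objects are isomorphic (the images of the isomorphisms of $\cat{A}$, which remain invertible in $\im F$), and each endomorphism monoid $\cat{B}(Fa,Fa)$ is isomorphic, by conjugation with such an isomorphism, to the group $\cat{B}(b_0,b_0)$, hence is a group. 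A category in which all objects are isomorphic and all endomorphism monoids are groups is a groupoid, so $\im F$ is a connected groupoid. Groupoids are Cauchy complete, their only idempotents being identities, so $\overline{\im F} \simeq \im F$.

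Applying Theorem~\ref{thm:ffCD} to the fully faithful Cauchy dense functor $H$ then exhibits $\cat{B}$ as a full subcategory of $\overline{\im F}$ containing the representables; since $\overline{\im F} \simeq \im F$, this forces $\cat{B} \simeq \im F$ to be a connected groupoid. Thus $j$ is fully faithful and essentially surjective, hence an equivalence, and the commuting square $F\iota = jc$ (with $\iota$ and $j$ equivalences) exhibits $F$ as equivalent to the surjective group homomorphism $c$. Finally, reassembling over $\pi_0$ via Corollary~\ref{cor:disj_union} expresses the original $F$ as equivalent to a disjoint union of surjective group homomorphisms, completing the forward direction.
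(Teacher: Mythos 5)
Your proof is correct and follows essentially the same route as the paper's: reduce along $\pi_0$ via Corollary~\ref{cor:disj_union}, factor through the full image as a bijective-on-objects functor followed by a fully faithful one, apply Corollary~\ref{cor:group_CD} to the first factor, and use Theorem~\ref{thm:ffCD} together with the Cauchy completeness (idempotent completeness) of groupoids to force the fully faithful factor to be an equivalence. The only difference is economy: the paper first replaces the connected groupoid by an equivalent group (exactly your restriction $\iota$ to one object) \emph{before} factorising, so that $\im F$ is a one-object category and is a group immediately by Corollary~\ref{cor:group_CD}, which makes your conjugation argument and the auxiliary lemma that a category with all objects isomorphic and group endomorphism monoids is a groupoid unnecessary -- though both are correct as you state them.
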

\begin{proof}
    Corollary~\ref{cor:disj_union} reduces our task to showing that a Cauchy dense functor whose domain is a group must be equivalent to a surjective group homomorphism. 
    
    Let $F \colon A \to \cat{B}$ be Cauchy dense, with $A$ a group, and let $A \xrightarrow{G} \im F \xrightarrow{H} \cat{B}$ be its factorisation into a bijective-on-objects functor followed by a fully faithful one. As before, both $G$ and $H$ are Cauchy dense because $F$ is. By Corollary~\ref{cor:group_CD}, $G$ is a surjective group homomorphism. By Theorem~\ref{thm:ffCD}, $\cat{B}$ is equivalent to a full subcategory of the idempotent completion of $\im F$ containing $\im F$. Since $\im F$ is a group, it is already idempotent complete, so $H$ must be an equivalence. Thus, $F$ is equivalent to a surjective group homomorphism.
\end{proof}

In particular:

\begin{corollary}
    A Cauchy dense functor whose domain is discrete is an equivalence.
\end{corollary}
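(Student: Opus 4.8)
The plan is to read this off directly from the immediately preceding proposition, which characterises the Cauchy dense functors whose domain is a groupoid. The only conceptual point is to recognise a discrete category as a very degenerate groupoid: its only morphisms are identities, hence trivially invertible, and each of its connected components is a single object whose endomorphism group is trivial. So a discrete category is precisely a disjoint union of copies of the trivial group $1$, and the previous proposition applies.

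First I would invoke that proposition to conclude that a Cauchy dense $F$ with discrete domain is equivalent to a disjoint union $\sqcup_i G_i$ of surjective group homomorphisms, indexed (via Corollary~\ref{cor:disj_union}) by the connected components of the domain. Since the domain is discrete, each component is the trivial group, so each $G_i$ is a surjective homomorphism of the form $G_i \colon 1 \to B_i$. Next I would observe that surjectivity forces $B_i = G_i(1) = \{1\}$, so each $B_i$ is trivial and each $G_i$ is an isomorphism $1 \to 1$. Consequently $\sqcup_i G_i$ is an equivalence, and so is $F$, being equivalent to it.

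I do not expect any genuine obstacle here, as this is a direct specialisation. The one step that warrants care is the bookkeeping that connects the decomposition into connected components with the triviality of each component of a discrete category: one must check that every summand of the domain really is the one-object trivial group, so that Corollary~\ref{cor:group_CD} (via the previous proposition) pins down each $B_i$ as trivial rather than merely as some quotient group.

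\begin{proof}
    A discrete category is a groupoid each of whose connected components is the trivial group $1$. By the previous proposition, $F$ is therefore equivalent to a disjoint union $\sqcup_i G_i$ of surjective group homomorphisms $G_i \colon 1 \to B_i$, indexed by the connected components of the domain. Surjectivity forces $B_i = \{1\}$, so each $G_i$ is an isomorphism. Hence $F$ is equivalent to an equivalence, and so is itself an equivalence.
\end{proof}
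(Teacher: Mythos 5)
Your proof is correct and is exactly the paper's intended argument: the paper derives this corollary as an immediate specialisation ("In particular") of the preceding groupoid proposition, just as you do. Your explicit bookkeeping—each connected component of a discrete category is the trivial group, so surjectivity forces each codomain $B_i$ to be trivial and each $G_i$ to be an isomorphism—is precisely the step the paper leaves implicit.
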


\subsection{Split-full Cauchy dense functors}\label{ssec:split-full}

We conclude by showing that for split-full functors, much like when $\cat{V}_0$ is a preorder, Cauchy density is equivalent to an apparently much weaker condition. This simplifies checking the hypotheses of Lemma~\ref{lem:sfCD_Cauchy_completion}. The proof, which is straightforward when $\cat{V} = \Set$, is slightly more involved in the general case. Note once again that in the simpler condition we quantify over a single $b \in \cat{B}$, instead of a pair of them.

\begin{theorem}\label{thm:split_full_CD}
Let $\cat{A}$ be a small category, and $F \colon \cat{A} \to \cat{B}$ be a split-full functor. Then $F$ is Cauchy dense iff 
\[\eps_{b,b} \colon \int^a \cat{B}(Fa,b) \otimes \cat{B}(b,Fa) \to \cat{B}(b,b)\]
is a split epimorphism for all $b \in \cat{B}$.
\end{theorem}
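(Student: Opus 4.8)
The plan is to prove the nontrivial direction: assuming $\eps_{b,b}$ is a split epimorphism for every $b \in \cat{B}$, together with split-fullness of $F$, deduce that $\eps_{b,b'}$ is an isomorphism for all $b,b'$. The forward direction is immediate, since if $F$ is Cauchy dense then each $\eps_{b,b'}$ is an isomorphism, hence in particular $\eps_{b,b}$ is a split epimorphism. The key conceptual point I would exploit is the one flagged in the remark after Lemma~\ref{lem:sfCD_Cauchy_completion}: the proof of that lemma establishes an adjunction $\cat{B}(F-,b) \dashv \cat{B}(b,F-)$ in $\cat{V}\hy\Prof$ using only split-fullness and the invertibility of $\eps_{b,b}$. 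So the first order of business is to weaken the hypotheses of Lemma~\ref{lem:sfCD_Cauchy_completion} to ``$\eps_{b,b}$ a split epimorphism''. In the proof of that lemma, the unit $\delta_*$ was built from $\eps_{b,b}^{-1}$; I would instead build it from a chosen section $\eps_{b,b}^s$ of $\eps_{b,b}$, and recheck the triangle identity. The bottom triangle of that diagram used $\eps_{Fa,b} \circ (\int 1 \otimes F_{a,a'}) = i$, and $i^{-1}\circ \eps_{Fa,b} = \int 1 \otimes F^r_{a,a'}$; none of this needed invertibility of $\eps_{b,b}$, only the density formula and the right inverse $F^r$. The square in that diagram used associativity in $\cat{B}$ and the fact that $\eps_{b,b}^{-1}$ composed correctly, which in the split setting becomes a statement about the section.

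Once the adjunction (or at least its relevant half) is in hand, the strategy is to show directly that the existence of a left adjoint to $\cat{B}(b,F-)$ in $\cat{V}\hy\Prof$, with the explicit unit and counit above, forces $\eps_{b,b'}$ to be invertible for all $b'$. Concretely, I would argue that $\cat{B}(F-,b) \colon \mathsf{1} \slashedrightarrow \cat{A}$ is a genuine left adjoint (not merely something admitting a split unit), which by Theorem~\ref{thm:Ccomp} means $\cat{B}(F-,b)$ lies in $\overline{\cat{A}}$ and is an absolute weight. The colimit $\cat{B}(F-,b)\star F$ (the weighted colimit of $F$ by this weight) is then preserved by the functor $\cat{B}(b',-)\colon \cat{B}\to\cat{V}$ for each $b'$, and unwinding this preservation is exactly the assertion that the canonical map $\int^a \cat{B}(Fa,b')\otimes\cat{B}(b,Fa)\to\cat{B}(b,b')$ is an isomorphism. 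The technical heart is thus verifying that the split section really does assemble into an honest unit satisfying both triangle identities, so that Theorem~\ref{thm:Ccomp}(c) applies.

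The main obstacle I expect is the passage from a split epimorphism to an actual adjunction. A split unit $I \to (F_*\otimes F^*)(b,b)$ does not by itself give a counit; I have to produce the counit $\nu$ from split-fullness (as in the lemma) and then verify both triangle identities simultaneously, and crucially confirm that the two structures are compatible, i.e.\ that the section chosen for $\eps_{b,b}$ is forced to be $\eps_{b,b}^{-1}$ once the counit is in play. In other words, the adjunction, once it exists, pins down the unit uniquely up to the counit, and I anticipate that the triangle identities will upgrade ``$\eps_{b,b}^s$ is a section'' to ``$\eps_{b,b}^s$ is a two-sided inverse'', thereby recovering full Cauchy density at the diagonal and then everywhere via the adjunction. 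Making this upgrade rigorous — showing the second triangle identity holds and that it forces invertibility rather than merely splitness — is where the general-$\cat{V}$ case becomes genuinely more delicate than the $\Set$ case, in which one can argue elementwise with the equivalence relation $\approx_F$ directly.
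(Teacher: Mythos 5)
Your Step 1 is sound: in the paper's proof of Lemma~\ref{lem:sfCD_Cauchy_completion}, the inverse $\eps_{b,b}^{-1}$ only ever appears composed against maps that factor through $\eps_{b,b}$ itself (the relevant square is checked by precomposing the associativity square with $\eps_{b,b}^{s}\otimes 1$ and using $\eps_{b,b}\circ\eps_{b,b}^{s}=1$), so the adjunction $\cat{B}(F-,b)\dashv\cat{B}(b,F-)$ in $\cat{V}\hy\Prof$ does exist under your weaker hypothesis. The genuine gap is in Step 2. Knowing that each $W_b=\cat{B}(F-,b)$ has a right adjoint in $\cat{V}\hy\Prof$ makes $W_b$ an absolute weight, but your preservation argument tacitly assumes that the colimit $W_b\star F$ exists in $\cat{B}$ \emph{and that the canonical cocone exhibits $b$ as that colimit}. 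That statement is exactly the density of $F$ at $b$ (i.e.\ $\Lan_F F(b)\cong b$), which is the nontrivial part of what you are trying to prove; it is not a consequence of absoluteness of the weights. Indeed, the inference ``each $\cat{B}(F-,b)$ is an absolute weight $\Rightarrow$ $F$ is Cauchy dense'' is false: for $F\colon\mathsf{1}\to\mathsf{2}$ picking the source $x$ of the walking arrow $x\to y$, every presheaf $\cat{B}(F-,b)$ is a singleton, hence representable and absolute, yet $F$ is not Cauchy dense ($\eps_{y,y}$ has empty domain). In this example $W_y\star F$ exists but equals $x$, not $y$, so preservation by $\cat{B}(b',-)$ yields a true statement about $x$ and says nothing about $\cat{B}(b',y)$. (This $F$ does not satisfy the theorem's hypotheses, since $\eps_{y,y}$ is not split epi, but it shows your step from ``adjunction exists'' to ``Cauchy dense'' is invalid as stated.)

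To repair the argument you must use the specific unit $\delta_b=\eps_{b,b}^{s}\circ j_b$ and counit $\nu$ in an essential way, e.g.\ by showing that under the isomorphism $[\cat{A}^\opp,\cat{V}](W_b,W_{b'})\cong\int^a\cat{B}(Fa,b')\otimes\cat{B}(b,Fa)$ furnished by the adjunction, the composite of $\eps_{b,b'}$ with the action $\cat{B}(b,b')\to[\cat{A}^\opp,\cat{V}](W_b,W_{b'})$ recovers that isomorphism; this makes $\eps_{b,b'}$ split mono as well as split epi, hence invertible. Verifying this for general $\cat{V}$ — where, unlike in $\Set$, one cannot lift a generalized element of a coend to a single cowedge component — is a diagrammatic coend computation of essentially the same difficulty as the paper's own proof, and it is precisely what your sketch defers with ``unwinding this preservation is exactly the assertion.'' For comparison, the paper avoids the adjunction altogether and argues directly: it first shows each $\eps_{b,b'}$ is a split epimorphism (an associativity square plus the unit axiom), and then shows $\eps_{b,b'}$ is monic, using split-fullness to identify two maps out of the double coend $\int^{a,a'}\cat{B}(b,Fa',Fa,b')$ together with naturality of $\eps$; split epi plus monic gives the isomorphism. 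Your route can likely be completed, but as written the crucial implication is circular.
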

\begin{proof}
    The forward implication is obvious, so we prove converse. 
    
    For $b_0,\dots,b_n \in \cat{B}$, we will use the abbreviation
    \[\cat{B}(b_0,\dots,b_n) \coloneqq \cat{B}(b_{n-1},b_n) \otimes \cdots \otimes \cat{B}(b_0,b_1).\]
    We write $M_{b_1} \colon \cat{B}(b_0,b_1,b_2) \to \cat{B}(b_0,b_2)$ for the composition map, and similarly for longer tuples. 
    
    Let $b,b' \in \cat{B}$. First, we will show that $\eps_{b,b'}$ is a split epimorphism. The square
    \[\begin{tikzcd}
    \int^a \cat{B}(b,Fa,b,b') 
    \ar[r, "1 \otimes \eps_{b,b}"] 
    \ar[d, "\int^a M_b"']
    &[1em] \cat{B}(b,b,b')
    \ar[d, "M_b"] \\
    \int^a \cat{B}(b,Fa,b') 
    \ar[r, "\eps_{b,b'}"]
    & \cat{B}(b,b')
    \end{tikzcd}\]
    commutes by associativity in $\cat{B}$. Now, $1 \otimes \eps_{b,b}$ is a split epimorphism by assumption, and so is $M_b$ by the right unit axiom in $\cat{B}$. It follows that $\eps_{b,b'}$ is also a split epimorphism.

    It now suffices to show that $\eps_{b,b'}$ is monic. The diagram
    \[\begin{tikzcd}
    \cat{B}(Fa,b') \otimes \cat{A}(a',a) \otimes \cat{B}(b,Fa') \ar[dr, "1 \otimes F_{a',a} \otimes 1", near end] \\
    &[-8em] \cat{B}(b,Fa',Fa,b') 
    \ar[r, "M_{Fa}"] \ar[d, "M_{Fa'}"'] &[1em]
    \cat{B}(b,Fa',b')
    \ar[d, "c_{a'}"]\\
    & \cat{B}(b,Fa,b')
    \ar[r, "c_a"'] &
    \int^a \cat{B}(b,Fa,b')
    \end{tikzcd}\]
    commutes when $c$ is a cowedge, and in particular when $c$ is the coend cowedge. Since $1 \otimes F_{a',a} \otimes 1$ has a right inverse by assumption, the square alone also commutes. Seeing each of the composites in the square as an appropriate cowedge shows that the two maps
    \[\begin{tikzcd}[row sep=-0.5em]
    &[2em] \int^a \cat{B}(b,Fa,b')
    \ar[dd, equals] \\
    \int^{a,a'} \cat{B}(b,Fa',Fa,b')
    \ar[ur, shift left=1.5, "\int^a 1 \otimes \eps_{b,Fa}", start anchor=east, near end, end anchor=south west]
    \ar[dr, shift right=1.5, "\int^{a'} \eps_{Fa',b'} \otimes 1"', start anchor=east, end anchor=north west, near end] \\
    & \int^{a'} \cat{B}(b,Fa',b')
    \end{tikzcd}\]
    coincide.

    The naturality of $\eps$ ensures that the two squares in the diagram
    \[\begin{tikzcd}[column sep=-1em]
    & \int^{a,a'} \cat{B}(b,Fa',b,Fa,b')
    \ar[dl, "\int^a 1 \otimes \eps_{b,b}"', near end] 
    \ar[dd, "\int^{a,a'} M_b"]
    \ar[dr, "\int^{a'} \eps_{b,b'} \otimes 1", very near end] \\
    \int^a \cat{B}(b,b,Fa,b')
    \ar[dd, shift right=6, "\int^a M_b"']
    && \int^{a'} \cat{B}(b,Fa',b,b')
    \ar[dd, shift left=6, "\int^{a'} M_b"] \\[2pt]
    & \int^{a,a'} \cat{B}(b,Fa',Fa,b')
    \ar[dl, "\int^a 1 \otimes \eps_{b,Fa}"', pos=0.7, outer sep=-2pt] 
    \ar[dr, "\int^{a'} \eps_{Fa',b'} \otimes 1", pos=0.8, outer sep=-1pt] \\
    \int^a \cat{B}(b,Fa,b')
    \ar[rr, equals]
    && \int^{a'} \cat{B}(b,Fa',b')
    \end{tikzcd}\]
    commute, so that the whole diagram commutes. Distributing tensors over coends, the top leftmost map may in fact be written as $1 \otimes \eps_{b,b}$, and the top rightmost map is just $\eps_{b,b'} \otimes 1$. Both maps on the left-hand side have right inverses. Writing $\eps_{b,b}^r$ for a right inverse to $\eps_{b,b}$, the composite
    \[\begin{tikzcd}[column sep=4em]
    \int^a \cat{B}(b,Fa,b') 
    \ar[r, "r^{-1}"]
    & \int^a \cat{B}(b,Fa,b') \otimes I 
    \ar[r, "1 \otimes j_b"]
    & \int^a \cat{B}(b,b,Fa,b') 
    \ar[d, "1 \otimes \eps_{b,b}^r"] \\
    \int^{a'} \cat{B}(b,Fa',b')
    & \int^{a'} \cat{B}(b,Fa',b,b') \ar[l, "\int^{a'} M_b"']
    & \int^{a,a'} \cat{B}(b,Fa',b,Fa,b')
    \ar[l, "\eps_{b,b'} \otimes 1"']
    \end{tikzcd}\]
    is the identity, where $r$ is the right unitor of $\cat{V}$. Then, for any morphism $x \colon X \to \int^a \cat{B}(b,Fa,b')$, we have
    \begin{align*}
    x &= \left(\textstyle\int^{a'} M_b\right) \circ (\eps_{b,b'}\otimes \eps_{b,b}^r j_b) \circ r^{-1} \circ x \\
    &= \left(\textstyle\int^{a'} M_b\right) \circ (\eps_{b,b'}\otimes \eps_{b,b}^r j_b) \circ  (x \otimes 1) \circ r^{-1} \\
    &= \left(\textstyle\int^{a'} M_b\right) \circ (\eps_{b,b'}x\otimes \eps_{b,b}^r j_b) \circ r^{-1}.
    \end{align*}
    In particular, $x$ is determined by $\eps_{b,b'}x$, so $\eps_{b,b'}$ is monic.
\end{proof}

Of course, a split-full Cauchy dense functor may not be fully faithful. For example, a surjective group homomorphism is split-full and Cauchy dense (by Corollary~\ref{cor:group_CD} above), but only fully faithful if it is an isomorphism.

\bibliography{C:/Users/drnai/OneDrive/Documents/References/monads}

\end{document}